\newtheorem{theorem}{Theorem}[section]
\newtheorem{proposition}[theorem]{Proposition}
\newtheorem{lemma}[theorem]{Lemma}
\theoremstyle{definition}
\theoremstyle{remark}
\numberwithin{equation}{section}
\begin{document}


\title{On incomplete Gamma and Beta integrals}

\author[Haoming Wang]{Haoming Wang}
\address{School of Mathematics, Sun Yat-sen University, Guangzhou {\rm510275}, China}
\curraddr{Center for Combinatorics, Nankai University, Tientsin {\rm500071}, China}
\email{wanghm37@nankai.edu.cn}
\thanks{}


\subjclass[2020]{Primary {33Bxx}; Secondary {33B20}, {33C20}, {62E15}.}
\keywords{Incomplete Gamma and Beta integrals, Matrix normal distribution, Latent roots, Analysis of variance}
\date{}

\dedicatory{}

\begin{abstract} This paper discusses the incomplete Gamma and Beta integrals involving the generalised hypergeometric function. The distribution of the largest and the smallest roots of a ratio arising in comparing the mean differences among groups is obtained as an application.
\end{abstract}

\maketitle

\section{Introduction}
The incomplete Gamma integral is closely related to the development of statistical distributions, such as \cite{Student1908}'s distribution of standard deviations, in samples from a normal population,
\begin{equation}
    p(a) = \frac{1}{\varGamma(\frac{n-1}{2})} A^{\frac{n-1}{2}} e^{-Aa} a^{\frac{n-3}{2}},
\end{equation}
where $n$ is the size of the sample, and
\[A= \frac{n}{2\sigma^2}, \quad a = s^2,\]
with $\sigma$ the standard deviation of the sampled population, and $s$ that estimated from the sample. Thus, if $x_1, x_2,\dots x_n$ are the sample values, 
\[\begin{aligned}
    n s^2 = \sum_{i=1}^{n} (x_i - \bar{x})^2, \quad n \bar{x} = \sum_{i=1}^{n} x_i.
\end{aligned}\]

When bivariate populations were considered, other problems arose, such as the distribution of the correlation coefficient and of the regression coefficient in samples. These problems, taken by themselves, were found to be difficult when \cite{Fisher1915} gave a formula for the simultaneous distribution of the three quadratic statistical derivatives, namely the two sample variances (squared standard deviation) and the sample covariances (product moment coefficient)
\begin{equation}
p(a,b,h)=\frac{1}{\pi^{\frac{1}{2}} \varGamma(\frac{n-1}{2}) \varGamma(\frac{n-2}{2})}\left|\begin{array}{ll}A & H \\ H & B\end{array}\right|^{\frac{n-1}{2}}e^{-A a-B b-2 H h} \cdot\left|\begin{array}{ll}a & h \\ h & b\end{array}\right|^{\frac{n-1}{2}},
\label{eq: product moment distribution}
\end{equation}
where
\begin{equation*}
    \begin{aligned}
    & A = \frac{n}{2\sigma_{1}^{2}(1-\rho^2)}, \,\quad B = \frac{n}{2\sigma_{2}^{2}(1-\rho^2)}, \,\quad H = \frac{n\rho}{2\sigma_{1}\sigma_{2}(1-\rho^2)},\\
& \, a = s_{1}^{2}, \qquad\qquad \qquad  b = s_{2}^{2}, \qquad \qquad \qquad  h = rs_{1}s_{2}.
\end{aligned}
\end{equation*}
where $\sigma_1, \sigma_2$ are the standard deviations of the sampled population and $\rho$ the correlation
between two population variables. 

Based on the works of \cite{Fisher1915}, \cite{Romanovsky1925}, and \cite{KPearson1925}, \cite{wishart1928generalised} further obtained the joint distribution of $p$ sample variances and $\frac{1}{2}p(p-1)$ sample covariances, leading to the product moment distribution in samples from a multivariate normal population, expressed in a neat matrix form
\begin{equation}
    p(S) = \frac{1}{2^{\frac{np}{2}}\varGamma_{p}(\frac{n}{2})|\varPsi|^{\frac{n}{2}}}\exp \left[-\frac{1}{2}\operatorname{tr}  (\varPsi^{-1}S)\right]|S|^{\frac{n-p-1}{2}}, \label{eq: central Wishart}
\end{equation}
where the probability density function tasks positive values when the $p\times p$ real symmetric matrix $S = (s_{ij})$ is positive definite and elsewhere zero, $\varPsi$ is the covariance of the sampled population, and
\[\varGamma_{p}(a) = \pi^{\frac{p(p-1)}{4}}\prod_{i=1}^{p} \varGamma \left(a - \frac{i-1}{2}\right),\]
namely the multivariate Gamma function, with the univariate Gamma function $\varGamma(\cdot)$ given by the integral
\[\varGamma(a) = \int_{0}^{\infty}e^{-z}z^{a-1}dz, \quad \Re(a) > 0. \]

When doing inference from such an ensemble of matrices, usually we need not the density function but the distribution of latent roots. Thus, \cite{constantine1963some, constantine1966hotelling}, inspired from the works of \cite{hotelling1931student}, \cite{Wilks1934}, \cite{hsu1939determinantal}, and \cite{james1961zonal, james1964distribution}, gave the distribution of the largest latent root of $S$ distributed according to \eqref{eq: central Wishart}, from the incomplete Gamma integral formula
\begin{equation}
    {P} (S < R) = \frac{\varGamma_{p}\left(\frac{n + p+1}{2}\right) }{\varGamma_{p}\left(\frac{n}{2}\right)\varGamma_{p}\left(\frac{p+1}{2}\right)}\left|({1}/{2})\varPsi^{-1}  R\right|^{\frac{n}{2}} {}_1F_{1} \left(\frac{n}{2}; \frac{n +p+1}{2}; -  \frac{1}{2}\varPsi^{-1}  R\right)
    \label{eq: incomplete gamma 63}
\end{equation}
where the probability \eqref{eq: incomplete gamma 63} is calculated from \eqref{eq: central Wishart} by integrating over where $R-S$ is positive definite. His result, in a word, expresses the incomplete Gamma integral as the confluent hypergeometric function $_{1}F_{1}$. 

Now we are confronted with the non-central problem that might also be of the same importance when the hypergeometric function is itself the integrand. As a natural extension to \eqref{eq: incomplete gamma 63}, \cite{Davis1979InvariantPW, Davis1981OnTC} gave the incomplete Gamma integral involving the generalised hypergeometric function $_{p}F_{q}$
\begin{equation}
    \begin{aligned}
        \int_{0}^{ R} \exp\left[-\frac{1}{2}\operatorname{tr}(\varPsi^{-1} S)\right]|S|^{\frac{n-p-1}{2}} {}_{p}F_{q}(\left(\begin{matrix}
            a_{1},  \dots,a_{p}\\
            b_{1},  \dots,b_{q}
        \end{matrix}; \varPhi^{-1} S\right) (d  S) \\
                = \frac{\varGamma_{p}\left(\frac{n+p+1}{2}\right) }{\varGamma_{p}\left(\frac{n}{2}\right)\varGamma_{p}\left(\frac{p+1}{2}\right)}| R|^{\frac{n}{2}} {}_{p+1}F_{q+1} \left(\begin{matrix}
              a, a_{1} ,\dots,a_{p}\\
            a + \frac{p+1}{2}, b_{1}, \dots,b_{q}
        \end{matrix};- \frac{1}{2}\varPsi^{-1} R, \varPhi^{-1} R\right)
    \end{aligned}
\end{equation}
This formula was then developed by \cite{Chikuse1981}, and simplified by the result of \cite{GUPTA2004671} to derive the exact distribution in the analysis of variance and its associated Behrens-Fisher problem.  Despite significant progress, such as \cite{Diaconis1994}, \cite{mssriv2003},  \cite{johnstone2008multivariate}, \cite{ramirez2011}, \cite{Dumitriu2012jacobi}, \cite{dubbs2014beta}, \cite{ChianiZanella2020}, \cite{mathai2022singular}, and \cite{ForresterKumar2023}, an explicit formula for the non-central distribution of the largest root is not well-known until present.

In this article, we are going to study the joint distribution of the $\frac{1}{2}p(p+1)$ quadratic forms and the joint distribution of the latent roots of $S = (s_{ij})$,
\begin{equation}
    \begin{aligned}
        {y}_1'{A}_{11}{y}_1(=s_{11}), {y}_1'{A}_{12}{y}_2(=s_{12}), \dots, {y}_1'{A}_{1p}{y}_p(=s_{1p}),\\
        {y}_2'{A}_{22}{y}_2(=s_{22}), \dots, {y}_2'{A}_{2p}{y}_p(=s_{2p}), \\
        \vdots\qquad \\
        {y}_p'{A}_{pp}{y}_p(=s_{pp}),
    \end{aligned}
    \label{eq: illustration of triangular ararry heter}
\end{equation}
where $ A_{ij}^{\prime} =  A_{ji}$ may indicate the inverted covariance matrix between the $i$th and the $j$th variables, and $Y=ZB$ with $B = (b_{1},b_{2},\dots,b_{p})$ being an orthogonal matrix and $\operatorname{vec}(Z')$ being an $n\times p$ normal vector. The joint distribution of the $\frac{1}{2}p(p+1)$ quadratic forms when $n > p - 1$, is found to be 
        \begin{eqnarray}
         \frac{| \varTheta |^{\frac{1}{2}}}{2^{\frac{np}{2}}\varGamma_{p}(\frac{n}{2})} \exp \left[-\frac{1}{2}  \operatorname{tr}(U T) \right] | T|^{\frac{n-p-1}{2}}    
        \end{eqnarray}
        where $\varTheta = \sum_{i,j=1}^{p} A_{ij} \otimes (b_{i}b_{j}^{\prime})$, $\otimes $ being the Kronecher product, $U = (u_{ij})$ with $u_{ij} = \operatorname{tr} (A_{ij})$, and $T=(t_{ij})$ with $t_{ij} = b_{i}^{\prime}Sb_{j}$. It can be shown that 
        $A_{jj^{\prime}}(k,k)  = 0 $ if $j \neq j^{\prime}$ if and only if $Z$ is partially diagonalisable, say there exist orthonormal column vectors $\{ b_{j}\}$ of length $p$ such that $Z$ can be developed into the sum
    	\[ Z = \sum_{j=1}^{p}  Z_{j}  {b_{j}}^{\prime},\eqno(T_{1})\]
    	where $ Z_{j} =  Zb_{j}$ and ${E} Z_{j}(i) { Z_{j^{\prime}}^{\prime}(i)} = c_{j}(i)\delta_{jj^{\prime}}$. Rather than this, there are three additional stochastic representations $T_{1\frac{1}{2}}$, $T_2$, $T_3$, often useful in establishing the non-central distribution of the largest root and related theorems. These results include
        \begin{enumerate}
            \item[(A)] The central and non-central distribution of quadratic forms,
            \item[(B)] The latent roots with unequal covariances, and
            \item[(C)] The non-central latent roots with unknown covariance,
        \end{enumerate}
        which extends classical results in multivariate analysis of variance based on incomplete Gamma and Beta integrals involving hypergeometric functions.
        

\section{Incomplete Gamma and Beta Integrals Involving Hypergeometric Functions}
Let $\operatorname{etr} = \exp \operatorname{tr}$. The multivariate Gamma function, denoted by $\varGamma_{p}(a)$, is defined to be
\begin{equation}
    \varGamma_{n}(a) = \int_{A>0} \operatorname{etr} (-A) |A|^{a-\frac{n+1}{2}} (dA),
\end{equation}
where $\Re (a) > \frac{1}{2}(n-1)$ and $A>0$ means the integral is taken over the space of real symmetric positive definite $n \times n$ matrices. 

The multivariate Beta function, denoted by $B_{n}(a,b)$, is defined to be
\begin{equation}
    B_{n}(a,b) =\int_{0< X < I} |X|^{a - \frac{n+1}{2}} |I-X|^{b - \frac{n+1}{2}} (dX),
\end{equation}
where $\Re (a), \Re (b) > \frac{1}{2}(n-1)$, and the integral is taken over all $n\times n$ real symmetric matrices $X$ such that both $X$ and $I - X$ are positive definite. The multivariate Beta function is related to the multivariate Gamma function by the formula 
\begin{equation}
    B_{n}(a,b) = \frac{\varGamma_{n}(a)\varGamma_{n}(b)}{\varGamma_{n}(a + b)}.
\end{equation}

The hypergeometric function of a matrix argument is defined by a recursive relation due to \cite{herz1955}
	\begin{equation}
	    \begin{aligned}\int_{ X >  0} \operatorname{etr}(- X Z) | X|^{a - \frac{n+1}{2}} &{}_{p}F_{q} \left(\begin{matrix}    a_{1}, \dots, a_{p}\\ b_{1}, \dots, b_{q}    	\end{matrix};  X\right) (d X)\\		= \varGamma_{n}(a) | Z|^{-a} &{}_{p+1}F_{q} \left(\begin{matrix}    a_{1}, \dots, a_{p},a\\ b_{1}, \dots, b_{q}\quad    	\end{matrix};  Z^{-1}\right) 	\end{aligned}\label{eq: gamma integral}\\
	\end{equation}
	\begin{equation}
	    \begin{aligned}
        \int_{\Re ( Z)>0} \operatorname{etr}( X Z) | Z|^{- b} &{}_{p}F_{q} \left(\begin{matrix}    a_{1}, \dots, a_{p}\\ b_{1}, \dots, b_{q}    	\end{matrix};  Z^{-1}\right) (d Z)\\		= {\varGamma_{n}(b)^{-1}2^{-n} }{(\pi i)^{\frac{n(n+1)}{2}}} | X|^{b - \frac{n+1}{2}} &{}_{p}F_{q+1} \left(\begin{matrix}    a_{1}, \dots, a_{p}\,\,\,\, \\ b_{1}, \dots, b_{q}, b     	\end{matrix};  X\right) 
        \end{aligned}
        \label{eq: gamma integral inv}
	\end{equation}
    where both $\Re (a)$ and $\Re (b) > \frac{1}{2}(n-1)$, and $ Z$ is an $n \times n$ complex symmetric matrix such that the real part of the matrix $ Z$ is positive definite, denoted as $\Re( Z) >  0$. 

The hypergeometric function of two matrix arguments of the same size is defined  as 
\begin{equation}
    \int_{O(n)} {}_{p}F_{q}\left(\begin{matrix}
    	    a_{1}, \dots, a_{p}\\ b_{1}, \dots, b_{q}
    	\end{matrix}; X H Y H^{\prime}\right) [dH] = {}_{p}F_{q}\left(\begin{matrix}
    	    a_{1}, \dots, a_{p}\\ b_{1}, \dots, b_{q}
    	\end{matrix}; X, Y\right),
\end{equation}
where $ X$ and $ Y$ are both $n \times n$ symmetric matrices, and the integral is taken over the space of $n\times n$ orthogonal matrices $O(n)$ with respect to the normalized Haar measure $[dH]$. If two matrices are of unequal size, for example, if $n \geq p$, and $A$ and $B$ $n\times n$ and $p\times p$ real symmetric matrices respectively,
 \[\begin{aligned}
        \int_{O(n)}{}_{p}F_{q}\left(\begin{matrix}
    	    a_{1}, \dots, a_{p}\\ b_{1}, \dots, b_{q}
    	\end{matrix}; A H_1 B H_1^{\prime}\right) [dH]& = {}_{p}F_{q}\left(\begin{matrix}
    	    a_{1}, \dots, a_{p}\\ b_{1}, \dots, b_{q}
    	\end{matrix}; A, B\right),\\
        H = [ H_{1},  H_{2}],&  \, H_1 \text{ is } n \times p,
    \end{aligned}\]
where the integral runs over the orthogonal group $O(n)$.

    \begin{lemma}[\cite{constantine1963some,constantine1966hotelling}]\label{lem: constantine} The incomplete Gamma and Beta integrals are 
    \begin{equation*}
        \begin{aligned}
                    \int_{0}^{ R} \operatorname{etr}(- A S)| S|^{a-\frac{p+1}{2}} d  S= \frac{| R|^a }{{B}_{p}\left(a,\frac{p+1}{2}\right)} {}_1F_{1} \left(a;a + \frac{p+1}{2}; -  A  R\right)\\
    \int_{0}^{ R} | S|^{a-\frac{p+1}{2}} | I -  S|^{b - \frac{p+1}{2}} d  S= \frac{| R|^a }{{B}_{p}\left(a,\frac{p+1}{2}\right)}{}_2F_{1} \left(a, - b + \frac{p+1}{2}; a + \frac{p+1}{2};  R\right)
        \end{aligned}
    \end{equation*}\end{lemma}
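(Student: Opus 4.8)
The plan is to reduce both integrals to the canonical region $0<X<I$ and then integrate a zonal-polynomial expansion term by term. First I would set $S = R^{1/2} X R^{1/2}$, where $R^{1/2}$ is the symmetric positive definite square root of $R$; this maps $\{\,0<S<R\,\}$ bijectively onto $\{\,0<X<I\,\}$, has Jacobian $(dS) = |R|^{\frac{p+1}{2}}(dX)$, and gives $|S| = |R|\,|X|$. Combining the determinant powers with the Jacobian pulls a clean factor $|R|^a$ out front, so in each case it remains to evaluate an integral of the shape $\int_{0<X<I}(\cdots)\,|X|^{a-\frac{p+1}{2}}(dX)$ against the transformed integrand, and I expect the whole normalization to be assembled from the multivariate Beta function $B_p(a,\frac{p+1}{2})$.

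For the incomplete Gamma integral, the substitution turns the exponential into $\operatorname{etr}(-R^{1/2}AR^{1/2}X)$, which I would expand in the zonal series $\operatorname{etr}(-BX) = \sum_{k\ge 0}\sum_{\kappa\vdash k}\frac{(-1)^k}{k!}C_\kappa(BX)$ with $B = R^{1/2}AR^{1/2}$, and integrate term by term. The workhorse is the Beta-type zonal integral
\[
\int_{0<X<I} C_\kappa(TX)\,|X|^{a-\frac{p+1}{2}}\,|I-X|^{b-\frac{p+1}{2}}\,(dX) = B_p(a,b)\,\frac{(a)_\kappa}{(a+b)_\kappa}\,C_\kappa(T),
\]
used here with $b=\frac{p+1}{2}$ so that the factor $|I-X|$ disappears. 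Resumming the resulting series in $(a)_\kappa/(a+\frac{p+1}{2})_\kappa$ reproduces the matrix ${}_1F_1$ with parameters $a$ and $a+\frac{p+1}{2}$; since zonal polynomials are functions of eigenvalues alone I may replace $C_\kappa(R^{1/2}AR^{1/2})$ by $C_\kappa(AR)$, so the argument of the ${}_1F_1$ becomes $-AR$.

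For the incomplete Beta integral the same substitution leaves the factor $|I-R^{1/2}XR^{1/2}|^{b-\frac{p+1}{2}}$, which I would expand by the matrix binomial series $|I-Z|^{-c} = {}_1F_0(c;Z) = \sum_{k,\kappa}\frac{(c)_\kappa}{k!}C_\kappa(Z)$ with $c = \frac{p+1}{2}-b$ and $Z = R^{1/2}XR^{1/2}$; this is legitimate because $0<X<I$ together with $R<I$ forces the eigenvalues of $Z$ below $1$. Integrating term by term against $|X|^{a-\frac{p+1}{2}}$ with the same Beta-type zonal integral (now with $T=R$) leaves a series carrying two Pochhammer symbols $(a)_\kappa$ and $(c)_\kappa$ over $(a+\frac{p+1}{2})_\kappa$, which resums to the matrix ${}_2F_1\!\left(a,\,-b+\frac{p+1}{2};\,a+\frac{p+1}{2};\,R\right)$, as in the statement.

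The hard part is not this bookkeeping but the two analytic ingredients underneath it. The first is the Beta-type zonal integral itself (Constantine and James): I would prove it by averaging the invariant measure over $O(p)$, using the reproducing property
\[
\int_{O(p)} C_\kappa(T H X H')\,[dH] = \frac{C_\kappa(T)\,C_\kappa(X)}{C_\kappa(I)}
\]
to reduce to the case $T=I$, and then evaluating the remaining scalar integral $\int_{0<X<I}C_\kappa(X)|X|^{a-\frac{p+1}{2}}|I-X|^{b-\frac{p+1}{2}}(dX)$ through the multivariate Beta function. The second is licensing the term-by-term integration: since $0<X<I$ is bounded and the zonal series of $\operatorname{etr}$ (and of ${}_1F_0$, provided $R<I$) converge uniformly there, dominated convergence permits the interchange, but I expect the genuine care to go into the uniform bounds and into tracking the constraint $R<I$ that keeps the ${}_2F_1$ series convergent.
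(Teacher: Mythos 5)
The paper offers no proof of this lemma at all --- it is quoted verbatim (up to a constant, see below) from \cite{constantine1963some,constantine1966hotelling} --- so there is no internal argument to compare against; what you have written is, in substance, Constantine's original derivation, and it is sound. The substitution $S=R^{1/2}XR^{1/2}$ with Jacobian $|R|^{\frac{p+1}{2}}$, the termwise integration of the zonal expansions of $\operatorname{etr}(-BX)$ and of ${}_1F_0\bigl(-b+\tfrac{p+1}{2};R^{1/2}XR^{1/2}\bigr)$ against the Beta-type zonal integral, and the identification $C_\kappa(R^{1/2}AR^{1/2})=C_\kappa(AR)$ are all correct, and your plan for the key zonal integral (orthogonal averaging via the splitting property to reduce to $T=I$, then the scalar case) is the standard one --- though note the scalar case is itself nontrivial and is usually obtained by relating the Beta-type integral to the Gamma-type zonal integral $\int_{A>0}\operatorname{etr}(-A)|A|^{a-\frac{p+1}{2}}C_\kappa(A)\,(dA)=(a)_\kappa\varGamma_p(a)C_\kappa(I)$ rather than ``through the multivariate Beta function'' directly. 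One discrepancy you should not paper over: carried to completion, your computation yields the prefactor $B_p\bigl(a,\tfrac{p+1}{2}\bigr)\,|R|^a$ (the Beta function as a \emph{multiplicative} factor, which is the form in Constantine's papers and in Muirhead), whereas the lemma as printed here has $|R|^a/B_p\bigl(a,\tfrac{p+1}{2}\bigr)$. Your proof therefore establishes the correct identity and exposes a reciprocal slip in the paper's transcription; you should state the constant explicitly rather than saying only that you ``expect the normalization to be assembled from'' $B_p\bigl(a,\tfrac{p+1}{2}\bigr)$.
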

    
    \begin{lemma}[\cite{Davis1979InvariantPW,Davis1981OnTC}]\label{lem: davis}
    The incomplete Gamma and Beta integrals involving hypergeometric functions are 
    \begin{equation}
    \begin{aligned}
    \int_{0}^{ R} \operatorname{etr}(- A S)| S|^{c-\frac{p+1}{2}} {}_{p}F_{q}\left(\begin{matrix}
                a_{1},  \dots,a_{p}\\
                b_{1},  \dots,b_{q}
            \end{matrix}; B  S\right) d  S \\
                    = \frac{| R|^a }{{B}_{p}(c,\frac{p+1}{2})}
                    {}_{p+1}F_{q+1} \left(\begin{matrix}
                c, a_{1},  \dots,a_{p}\\
                c + \frac{p+1}{2},b_{1},  \dots,b_{q}
            \end{matrix}; -  A  R, B  R\right)
    \end{aligned}
    \end{equation}
    \begin{equation}
            \begin{aligned}
            \int_{0}^{ R} | S|^{c-\frac{p+1}{2}} | I -  S|^{d - \frac{p+1}{2}} {}_{p}F_{q}\left(\begin{matrix}
                a_{1},  \dots,a_{p}\\
                b_{1},  \dots,b_{q}
            \end{matrix}; A  S\right) d  S \\ = \frac{| R|^a }{{B}_{p}\left(c,\frac{p+1}{2}\right)} {}_{p+2}F_{q+1} \left(
            \begin{matrix}
                c, - d + \frac{p+1}{2}, a_{1},  \dots,a_{p}\\
                c + \frac{p+1}{2}, b_{1},  \dots,b_{q}
            \end{matrix};  A,  A  R\right)
        \end{aligned}
    \end{equation}
    \end{lemma}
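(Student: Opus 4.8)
The plan is to establish both identities by the device that proves Lemma~\ref{lem: constantine}: normalise the region of integration to the unit interval, expand every hypergeometric and determinantal factor into its zonal polynomial series, integrate term by term, and resum. Consider first the incomplete Gamma formula. The substitution $S=R^{1/2}WR^{1/2}$, with Jacobian $(dS)=|R|^{(p+1)/2}(dW)$, carries the region $0<S<R$ onto $0<W<I$ and extracts the factor $|R|^{c}$ from $|S|^{c-(p+1)/2}$, leaving $\operatorname{etr}(-AR^{1/2}WR^{1/2})\,{}_pF_q(\cdots;BR^{1/2}WR^{1/2})$ against the weight $|W|^{c-(p+1)/2}$. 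Writing $\operatorname{etr}(-AS)$ and ${}_pF_q(\cdots;BS)$ as zonal series and using $C_\kappa(XY)=C_\kappa(YX)$ to slide the factors of $R^{1/2}$, the integrand becomes a double series whose generic term is a product $C_\sigma(WX_1)\,C_\tau(WY_1)$ with $X_1=-R^{1/2}AR^{1/2}$ and $Y_1=R^{1/2}BR^{1/2}$.

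The engine of the computation is the beta-type integral of a single zonal polynomial,
\begin{equation*}
\int_{0<W<I} |W|^{c-\frac{p+1}{2}} C_\kappa(WZ)\,(dW)=B_p\!\left(c,\tfrac{p+1}{2}\right)\frac{(c)_\kappa}{\left(c+\frac{p+1}{2}\right)_\kappa}\,C_\kappa(Z),
\end{equation*}
which alone produces the ${}_1F_1$ of Lemma~\ref{lem: constantine}. To handle the \emph{product} $C_\sigma(WX_1)\,C_\tau(WY_1)$ I would pass to the invariant polynomials $C^{\sigma,\tau}_\phi(X,Y)$ of two matrix arguments of \cite{Davis1979InvariantPW,Davis1981OnTC}, expanding $C_\sigma(WX_1)\,C_\tau(WY_1)=\sum_\phi \theta^{\phi}_{\sigma,\tau}\,C^{\sigma,\tau}_\phi(WX_1,WY_1)$, and apply the two-argument analogue of the beta integral,
\begin{equation*}
\int_{0<W<I} |W|^{c-\frac{p+1}{2}} C^{\sigma,\tau}_\phi(WX,WY)\,(dW)=B_p\!\left(c,\tfrac{p+1}{2}\right)\frac{(c)_\phi}{\left(c+\frac{p+1}{2}\right)_\phi}\,C^{\sigma,\tau}_\phi(X,Y).
\end{equation*}
Collecting the generalized Pochhammer ratios with the coefficients carried in from the two series and recognising the triple sum over $\sigma,\tau,\phi$ as the two-matrix-argument ${}_{p+1}F_{q+1}$ (understood in the sense of Davis's invariant polynomials) gives $|R|^{c}/B_p(c,\frac{p+1}{2})$ times ${}_{p+1}F_{q+1}(\cdots;-R^{1/2}AR^{1/2},R^{1/2}BR^{1/2})$; a final appeal to the invariance of the invariant polynomials under a common similarity by $R^{1/2}$ rewrites the arguments as $-AR$ and $BR$, as claimed.

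For the incomplete Beta formula the single new ingredient is the determinantal weight, which I would expand by the matrix binomial theorem, $|I-S|^{d-\frac{p+1}{2}}=\sum_\sigma \frac{(\tfrac{p+1}{2}-d)_\sigma}{s!}\,C_\sigma(S)$. This extra series is the source both of the additional numerator parameter $-d+\frac{p+1}{2}$ and of the promotion from ${}_pF_q$ to ${}_{p+2}F_{q+1}$. After the same substitution $S=R^{1/2}WR^{1/2}$, the integrand is once more a product of two zonal series, one from the binomial factor and one from ${}_pF_q$, so the invariant-polynomial expansion, the beta integral, and the resummation proceed exactly as before and deliver the two-argument ${}_{p+2}F_{q+1}$ of the statement.

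The step that carries the real content, and the main obstacle, is the treatment of the product of two zonal series. Two points demand care. The interchange of summation and integration must be justified: on the compact region $0<W<I$ the eigenvalues of all arguments are bounded, so the relevant series converge uniformly and dominated convergence applies, though the binomial series in the Beta case requires the spectral radius of $S$ to stay below one and may need analytic continuation in $R$. More substantively, the combinatorics must be checked exactly: one has to verify that the Pochhammer ratios $(c)_\phi/(c+\frac{p+1}{2})_\phi$ produced by the two-argument beta integral combine with the connection coefficients $\theta^\phi_{\sigma,\tau}$ precisely as in the definition of the two-matrix-argument hypergeometric function, so that the off-diagonal ($\sigma\neq\tau$) contributions reassemble into the stated ${}_{p+1}F_{q+1}$ and ${}_{p+2}F_{q+1}$ rather than some other series. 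Matching these coefficients, not the analytic estimates, is where the proof must be done carefully.
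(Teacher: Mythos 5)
The paper does not prove this lemma at all: it is quoted verbatim from Davis with a citation, so there is no internal proof to compare against. Your proposal is, in outline, a faithful reconstruction of Davis's own derivation --- normalise to $0<W<I$ via $S=R^{1/2}WR^{1/2}$, expand $\operatorname{etr}(-AS)$ (resp.\ $|I-S|^{d-\frac{p+1}{2}}={}_1F_0(-d+\tfrac{p+1}{2};S)$) and the ${}_pF_q$ into zonal series, linearise the product $C_\sigma(\cdot)C_\tau(\cdot)$ through the invariant polynomials $C^{\sigma,\tau}_\phi$, integrate term by term with the two-argument Beta/Laplace integral, and resum. You also correctly locate the real content in the bookkeeping of the coefficients $\theta^\phi_{\sigma,\tau}$ against the generalized Pochhammer ratios $(c)_\phi/(c+\tfrac{p+1}{2})_\phi$.

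Two cautions. First, the series you obtain is a \emph{triple} sum over $\sigma,\tau,\phi$, i.e.\ Davis's invariant-polynomial ${}_{p+1}F_{q+1}$; this is \emph{not} the same object as the two-matrix-argument ${}_pF_q$ the paper defines in Section~2 by averaging over $O(n)$, which is diagonal in a single partition $\kappa$. So the final ``recognition'' step is really an appeal to Davis's definition, not an identity you can verify against the paper's own definition --- the paper's notation silently conflates the two, and your hedge ``understood in the sense of Davis's invariant polynomials'' is doing essential work there. Second, your derivation naturally produces the normalisation $|R|^{c}$ and, in the Beta case, the argument pair $(R,\,AR)$ (since the binomial factor contributes $C_\sigma(R^{1/2}WR^{1/2})$, not $C_\sigma(AW)$); the statement as printed has $|R|^{a}$ with $a$ undefined and the pair $(A,\,AR)$, both of which appear to be typos that your computation implicitly corrects. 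The two-argument beta integral you invoke as the ``engine'' is itself a nontrivial theorem of Davis (obtained from his Laplace transform of $C^{\sigma,\tau}_\phi(AS,BS)$ by a convolution argument), so a self-contained proof would need to include that step; as a proof that leans on Davis's machinery, your outline is sound.
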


\section{Matrix Normal Populations $T_{1}$, $T_{1\frac{1}{2}}$, $T_{2}$, and $T_{3}$}

Before going on, we will introduce four matrix normal distributions $X$, viewed as an $n \times p$ matrix with normal variables if $\operatorname{vec}(X')$, the usual vectorisation of $X'$, is the multivariate normal distribution $N_{np}(0,\varSigma)$. We classify the matrix normal populations according to four tensor decompositions of their precision matrix $\varTheta = \varSigma^{-1}$. To make this clear, block an $np\times np$ real symmetric positive definite matrix $\varTheta$ as
	\begin{equation}
		\varTheta = \begin{bmatrix}
			\varTheta_{11} &\varTheta_{12} & \dots & \varTheta_{1n}\\
			\varTheta_{21} & \varTheta_{22} & \dots & \varTheta_{2n}\\
			\vdots & \vdots & \ddots & \vdots \\
			\varTheta_{n1} & \varTheta_{n2} & \dots & \varTheta_{nn}
		\end{bmatrix}
		\label{eq: block matrix}
	\end{equation}
	where each $\varTheta_{ii}$ is positive definite of order $p$. Assume $\varTheta$ is one of the following four types.
	\begin{itemize}
		\item There exist orthonormal column vectors $\{b_{j}\}$ of length $p$ such that $\varTheta$ can be developed into the sum
		\begin{equation*}
		    \varTheta = \sum_{j=1}^{n}\sum_{j^{\prime}=1}^{n} A_{jj^{\prime}} \otimes B_{jj^{\prime}}, \eqno{(T_{1})}
		\end{equation*}
		where $B_{jj^{\prime}} = b_{i} \overline{b}_{j^{\prime}}^{\prime}$ and $A_{jj^{\prime}}(k,l) = \overline{b}_{j}^{\prime} \varTheta_{kl} b_{j^{\prime}}$. In this case, $A_{jj^{\prime}}(k,k)  = 0 $ if $j \neq j^{\prime}$, or in a word, $A_{jj^{\prime}}$ vanishing on diagonals if $j\neq j'$.
		\item It is $T_{1}$ and additionally, 
        \begin{equation*}
            A_{jj^{\prime}} = A_{jj}\delta_{jj^{\prime}}, \eqno{(T_{1\frac{1}{2}})}
        \end{equation*}
        that is, $A_{jj^{\prime}}$ vanishing if $j\neq j^{\prime}$.
		\item It is $T_{1}$ and there exist further orthonormal column vectors $\{a_{i}\}$ of length $n$ such that $\varTheta$ can be developed into the sum
		\begin{equation*}
		    \varTheta = \sum_{i=1}^{n}\sum_{j=1}^{p} \gamma_{ij} A_{i} \otimes B_{j}, \eqno{(T_{2})}
		\end{equation*}
		where $A_{i} = a_{i}\overline{a}_{i}^{\prime}$, $B_{j} = b_{j}\overline{b}_{j}^{\prime}$, and $\otimes$ is the Kronecker product operation.
		\item It is $T_{2}$ and additionally, there exist $\alpha_{i}, \beta_{j}$ such that $\gamma_{ij} = \alpha_{i} \beta_{j}$ or equivalently
	\begin{equation*}
	    \varTheta =  \varPhi^{-1} \otimes \varPsi^{-1}, \eqno{(T_{3})}
	\end{equation*}	
        where $\varPhi^{-1}  = \sum_{i=1}^{n} \alpha_{i} a_{i} \overline{a}_{i}^{\prime}$ and $\varPsi^{-1}  = \sum_{j=1}^{p} \beta_{j} b_{j}\overline{b}_{j}^{\prime}$.
	\end{itemize}

\begin{proposition}\label{prop: pdf} The probability density function $p(X)$ of a matrix normal population $X$ with its precision matrix $ \varTheta_{i}$ given by $T_{i}$, $i=1,1\frac{1}{2},2,3$ are

\begin{equation*}
                p(X) = \frac{| \varTheta_{1}|^{\frac{1}{2}}}{(2\pi)^{\frac{np}{2}}} 
            \operatorname{etr} \left(-\frac {1}{2} \sum_{j=1}^{p} { A}_{jj}  X{ B}_{jj}{ X}^{\prime} - \sum_{j=1}^{p}\sum_{j^{\prime}=j+1}^{p}{ A}_{jj^{\prime}}  X{ B}_{jj^{\prime}}{ X}^{\prime}\right),\eqno(T_1)
			\label{eq: T1 d}\\
\end{equation*}
\begin{equation*}
                p(X) = \frac{| \varTheta_{1\frac{1}{2}}|^{\frac{1}{2}}}{(2\pi)^{\frac{np}{2}}}\operatorname{etr} \left(-\frac {1}{2} \sum_{j=1}^{p} { A}_{jj} X{ B}_{jj}{ X}^{\prime}\right), \eqno(T_{1\frac{1}{2}})		\label{eq: T1.5 d}\\
\end{equation*}
\begin{equation*}
            p( X) =\frac{| \varTheta_{2}|^{\frac{1}{2}}}{(2\pi)^{\frac{np}{2}}}
             \operatorname{etr} \left(-\frac {1}{2} \sum_{i=1}^{n}\sum_{j=1}^{p} \gamma_{ij} { A}_{i} X{ B}_{j}{ X}^{\prime}\right),\eqno(T_2)
			\label{eq: T2 d}\\
\end{equation*}
\begin{equation*}
                p( X) = \frac{1}{(2\pi)^{\frac{np}{2}}| \varPhi|^{\frac{p}{2}}| \varPsi|^{\frac{n}{2}}}
               \operatorname{etr} \left(-\frac {1}{2}  \varPhi^{-1}  X  \varPsi^{-1} {{ X}}^{\prime}\right). \eqno(T_3)
				\label{eq: T3 d}
\end{equation*}
\end{proposition}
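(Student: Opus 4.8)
The plan is to start from the defining fact that a matrix normal $X$ is just the reshaping of a Gaussian vector $\operatorname{vec}(X') \sim N_{np}(0,\varTheta^{-1})$, so that
\[
p(X) = \frac{|\varTheta|^{1/2}}{(2\pi)^{np/2}} \exp\!\left(-\tfrac12\, \operatorname{vec}(X')'\, \varTheta\, \operatorname{vec}(X')\right)
\]
holds verbatim in all four cases. The whole task then reduces to rewriting the scalar $\operatorname{vec}(X')'\varTheta\operatorname{vec}(X')$ under each decomposition and, for $T_3$, evaluating $|\varTheta|$. The one computational tool I would use throughout is the \emph{vec--trace identity}: combining $\operatorname{vec}(PYQ) = (Q'\otimes P)\operatorname{vec}(Y)$ with $\operatorname{tr}(U'V) = \operatorname{vec}(U)'\operatorname{vec}(V)$ gives, for any $A$ of order $n$ and $B$ of order $p$,
\[
\operatorname{vec}(X')'(A\otimes B)\operatorname{vec}(X') = \operatorname{tr}(A'\, X\, B\, X').
\]
Since each $T_i$ presents $\varTheta$ as a sum of Kronecker products $A\otimes B$, I would simply apply this identity termwise and sum.

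For $(T_1)$, substituting $\varTheta = \sum_{j,j'} A_{jj'}\otimes B_{jj'}$ with $B_{jj'} = b_j\bar b_{j'}'$ turns the quadratic form into $\sum_{j,j'}\operatorname{tr}(A_{jj'}'\, X\, B_{jj'}\, X')$. The regrouping into diagonal and strictly-upper terms is where care is needed: because $\varTheta$ is symmetric one has $A_{j'j} = A_{jj'}'$ and $B_{j'j} = B_{jj'}'$, and transpose-invariance of the trace shows the $(j,j')$ and $(j',j)$ contributions coincide, so each off-diagonal pair doubles and cancels the leading $\tfrac12$, while the diagonal blocks $A_{jj}$ are symmetric and retain the factor $\tfrac12$. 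This yields exactly the stated exponent $-\tfrac12\sum_j A_{jj} X B_{jj}X' - \sum_{j<j'}A_{jj'}XB_{jj'}X'$. The population $(T_{1\frac12})$ is then immediate: the hypothesis $A_{jj'}=A_{jj}\delta_{jj'}$ annihilates every off-diagonal term and leaves only the first sum.

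For $(T_2)$, writing $\varTheta = \sum_{i,j}\gamma_{ij}A_i\otimes B_j$ with the rank-one symmetric factors $A_i = a_i\bar a_i'$ and $B_j = b_j\bar b_j'$, the termwise identity gives $\sum_{i,j}\gamma_{ij}\operatorname{tr}(A_i X B_j X')$ at once, with no regrouping needed since every $A_i$ is symmetric; this is the claimed exponent. For $(T_3)$ the same identity applied to $\varTheta = \varPhi^{-1}\otimes\varPsi^{-1}$ produces $\operatorname{tr}(\varPhi^{-1}X\varPsi^{-1}X')$, and the normalising constant follows from the Kronecker determinant rule $|A\otimes B| = |A|^{p}|B|^{n}$ for $A$ of order $n$ and $B$ of order $p$: here $|\varTheta| = |\varPhi^{-1}|^{p}|\varPsi^{-1}|^{n} = |\varPhi|^{-p}|\varPsi|^{-n}$, so that $|\varTheta|^{1/2}/(2\pi)^{np/2}$ collapses to $(2\pi)^{-np/2}|\varPhi|^{-p/2}|\varPsi|^{-n/2}$, matching the stated density.

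The only genuinely delicate step is the symmetrisation in $(T_1)$: tracking the transposes $A_{j'j}=A_{jj'}'$ and the resulting asymmetry between the $\tfrac12$-weighted diagonal terms and the unit-weighted off-diagonal terms. Everything else is a direct application of the vec--trace and Kronecker-determinant identities. I would also note at the outset that, since $\varTheta$ is real symmetric, the relevant orthonormal systems $\{a_i\}$ and $\{b_j\}$ may be taken real, so the conjugations $\bar a_i$, $\bar b_j$ are cosmetic and do not affect the real densities above.
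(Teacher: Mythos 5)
Your proposal is correct and follows essentially the same route as the paper, whose entire argument is the one-line observation that $\operatorname{tr}(AXBX') = \operatorname{vec}(X')'(A\otimes B)\operatorname{vec}(X')$ applied termwise to each Kronecker decomposition of $\varTheta$, together with multivariate normality of $\operatorname{vec}(X')$. You merely make explicit the details the paper leaves implicit (the symmetrisation of the off-diagonal terms in $T_1$ and the Kronecker determinant identity for the $T_3$ normalising constant), which is a faithful elaboration rather than a different method.
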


\begin{proposition}\label{prop: sd} Suppose $Z$ is a matrix population with precision matrix $\varTheta$.
        \begin{itemize}
	\item $Z$ is partially diagonalisable, say there exist orthonormal column vectors $\{ b_{j}\}$ of length $p$ such that $Z$ can be developed into the sum
	\[ Z = \sum_{j=1}^{p}  Z_{j}  \overline{b}_{j}^{\prime},\eqno(T_{1})\]
	where $ Z_{j} =  Zb_{j}$ and ${E}  Z_{j}(i) { \overline Z_{j^{\prime}}^{\prime}(i)} = c_{j}(i)\delta_{jj^{\prime}}$, or in a word, the random coefficients $Z_{j}$ are pairwise uncorrelated {at each row index} $i$ if and only if $\varTheta$ is $T_{1}$.
	\item $Z$ is {partial orthogonal diagonalisable}, say it is $T_{1}$ and the coefficients $ Z_{j}$ are {totally uncorrelated with each other}
    $${ E}  Z_{j}(i) \overline Z_{j^{\prime}}^{\prime}(i^{\prime}) = c_{j}(i,i^{\prime})\delta_{jj^{\prime}} \eqno(T_{1\frac{1}{2}})$$
    if and only if $\varTheta$ is $T_{1\frac{1}{2}}$.
	\item $Z$ is { totally orthogonal diagonalisable}, say it is $T_{1}$ and there exist further row vectors $\{ a_{i}\}$ of length $n$ such that $ Z$ can be developed into the sum
	\[ Z = \sum_{i=1}^n \sum_{j=1}^p \gamma_{ij}  a_{i} \overline {b}_{i}^{\prime},\eqno(T_{2})\]
	where $\gamma_{ij} = \overline{ a}_{i}^{\prime}  Z b_{j}$ and the random variables $\gamma_{ij}$ are {pairwise uncorrelated with each other} ${ E}\gamma_{ij} \overline{\gamma}_{i^{\prime}j^{\prime}} = c_{ij}\delta_{ii^{\prime}}\delta_{jj^{\prime}}$ if and only if $\varTheta$ is $T_{2}$.
	\item $Z$ is {totally diagonalisable}, say it is $T_{2}$ and there exist $\{\tau^2_{i}\}$ and $\{\sigma^2_{j}\}$ such that 
    $${ E} \gamma_{ij} \overline{\gamma}_{jj^{\prime}} =\sigma_{i}^{2}\tau_{j}^{2}\delta_{ii}\delta_{j^{\prime}j^{\prime}}\eqno(T_{3})$$
    if and only if $\varTheta$ is $T_{3}$.
    \item $Z$ is degenerate to rank one if and only if $\varTheta$ is $T_{3}$ and there exist random variables $\alpha_{i}, \beta_{j}$ such that $\gamma_{ij} = \alpha_{i} \beta_{j}$ and
	\[ Z = \left(\sum_{i=1}^{n} \alpha_{i}  a_{i}\right) \left(\sum_{j=1}^{p}  \beta_{j}  \overline{b}_{j}^{\prime}\right),\eqno(**)\]
	where ${E} \alpha_{i}\overline {\alpha}_{i^{\prime}} = \sigma_{i}^{2}\delta_{ii^{\prime}}$, ${E} \beta_{j}\overline {\beta}_{j^{\prime}} = \tau_{j}^{2}\delta_{jj^{\prime}}$.
        \end{itemize}
\end{proposition}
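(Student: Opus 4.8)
The plan is to treat all five equivalences by a single mechanism: translate each tensor decomposition of the precision matrix $\varTheta$ into a canonical block form by an orthogonal change of variables, and then read the covariance structure of the transformed coordinates directly off $\varSigma = \varTheta^{-1}$. Concretely, assemble the column basis into the orthogonal matrix $B = [b_1,\dots,b_p]$ and set $W = ZB$, so that the random coefficients are exactly the columns $Z_j = Zb_j$ of $W$; for the finer statements assemble also the row basis $A = [a_1,\dots,a_n]$ and pass to $A^{*}ZB$, whose entries are the scalars $\gamma_{ij} = \overline{a}_i^{\prime}Zb_j$. Since $B$ and $A$ are orthonormal, these are invertible linear changes of variable that preserve normality and act on the precision by the congruence $\varTheta \mapsto (I_n\otimes B)^{*}\varTheta(I_n\otimes B)$, under which the $(j,j')$ entry of the $(k,l)$ block is precisely $A_{jj'}(k,l)=\overline{b}_j^{\prime}\varTheta_{kl}b_{j'}$. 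Thus $(T_2)$ sends $\varTheta$ to the diagonal matrix $\operatorname{diag}(\gamma_{ij})$, $(T_3)$ to the same with Kronecker entries $\alpha_i\beta_j$, $(T_{1\frac12})$ to the column-wise direct sum $\bigoplus_j A_{jj}$, and $(T_1)$ to a matrix whose row-diagonal blocks $(\varTheta_W)_{kk}$ are themselves diagonal.

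First I would dispose of $(T_2)$, $(T_3)$ and $(T_{1\frac12})$, where the argument is immediate: a diagonal matrix inverts to a diagonal matrix and a direct sum $\bigoplus_j A_{jj}$ inverts to $\bigoplus_j A_{jj}^{-1}$, so the canonical zero pattern of $\varTheta$ is inherited verbatim by $\varSigma$. Hence for $(T_2)$ the coordinates $\gamma_{ij}$ are mutually uncorrelated with variances equal to the reciprocals of the corresponding diagonal entries of $\varTheta$; for $(T_3)$ the separable form $\varTheta = \varPhi^{-1}\otimes\varPsi^{-1}$ gives $\varSigma = \varPhi\otimes\varPsi$, whence the variances factor as $\sigma_i^{2}\tau_j^{2}$; and for $(T_{1\frac12})$ the blocks $Z_j, Z_{j'}$ are totally uncorrelated for $j\neq j'$. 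Each converse is the same computation run backwards, using that $\varSigma\mapsto\varSigma^{-1}$ is an involution preserving these patterns and that the congruence is reversible. The rank-one endpoint $(**)$ is then pure algebra: $\gamma_{ij}=\alpha_i\beta_j$ holds iff $Z=(\sum_i\alpha_i a_i)(\sum_j\beta_j\overline{b}_j^{\prime})$, and the stated second moments of $\alpha_i$ and $\beta_j$ are exactly what the $(T_3)$ variance structure records.

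The delicate case, and the one I expect to be the main obstacle, is the $(T_1)$ equivalence. Here $(T_1)$ constrains only the row-diagonal blocks of $\varTheta$ in the $\{b_j\}$ frame to be diagonal, leaving the off-diagonal blocks free; unlike a genuine direct sum, this pattern is \emph{not} preserved under inversion, so it cannot be transferred to $\varSigma$ to yield a marginal uncorrelatedness of the $Z_j(i)$. The correct reading is that the $Z_j$ are uncorrelated at a fixed row index only after conditioning on the remaining coordinates: the vanishing of $A_{jj'}(k,k)=\overline{b}_j^{\prime}\varTheta_{kk}b_{j'}$ for $j\neq j'$ is precisely the zero pattern of the precision linking two entries of row $k$, which by the Gaussian conditional-covariance identity is the partial correlation of $Z_j(k)$ and $Z_{j'}(k)$ given everything else. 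I would therefore prove $(T_1)$ at the level of the density of Proposition~\ref{prop: pdf}: writing the exponent as $-\tfrac12\sum_{j,j'}Z_{j'}^{*}A_{jj'}Z_j$ with $Z_j=Xb_j$, the cross terms coupling distinct $j$ within a common row disappear exactly when $A_{jj'}$ vanishes on its diagonal, and this factorisation is what characterises partial diagonalisability.

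Finally I would record the inclusions $(T_3)\subset(T_2)\subset(T_{1\frac12})\subset(T_1)$ directly from the definitions, each step merely imposing one further constraint on the admissible $A_{jj'}$ and $\gamma_{ij}$, so that the five bullets form a nested chain and the ``it is $T_1$ and additionally'' clauses are automatically consistent. The only bookkeeping needing care is in $(T_3)$: the separability $\gamma_{ij}=\alpha_i\beta_j$ is strictly stronger than mere uncorrelatedness of the $\gamma_{ij}$, and one must check that the factored second moments $\sigma_i^{2}\tau_j^{2}$ are consistent with a positive-definite $\varPhi\otimes\varPsi$, which follows since $\varPhi^{-1}=\sum_i\alpha_i a_i\overline{a}_i^{\prime}$ and $\varPsi^{-1}=\sum_j\beta_j b_j\overline{b}_j^{\prime}$ are positive definite by construction.
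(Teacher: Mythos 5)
Your overall mechanism --- vectorise, block the precision matrix, apply the congruence $\varTheta\mapsto(I_n\otimes B)^{*}\varTheta(I_n\otimes B)$ (and the two-sided version for the finer types), then read the second-moment structure off $\varSigma=\varTheta^{-1}$ --- is the same one the paper uses, except that the paper compresses everything into ``from the classical results of the spectral decomposition \dots\ we can read from Table 1'' and never actually passes from $\varTheta$ to $\varSigma$. Your treatment of $T_{1\frac12}$, $T_2$, $T_3$ and $(**)$ supplies exactly that missing step: the relevant canonical patterns (a direct sum $\bigoplus_j A_{jj}$ after regrouping coordinates by $j$, a diagonal matrix, a Kronecker product) are stable under inversion, so the zero pattern of $\varTheta$ transfers verbatim to $\varSigma$ and the asserted uncorrelatedness of the $Z_j$, resp.\ $\gamma_{ij}$, follows; the converses are the same computation with $\varSigma$ and $\varTheta$ exchanged. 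This is correct and is a genuine improvement on the paper's table, which only restates the $\varTheta$-side conditions. (One small point on $(**)$: passing from $E\gamma_{ij}\overline{\gamma}_{i'j'}=\sigma_i^{2}\tau_j^{2}\delta_{ii'}\delta_{jj'}$ to the separate identities $E\alpha_i\overline{\alpha}_{i'}=\sigma_i^{2}\delta_{ii'}$ and $E\beta_j\overline{\beta}_{j'}=\tau_j^{2}\delta_{jj'}$ requires $E[\alpha_i\overline{\alpha}_{i'}\beta_j\overline{\beta}_{j'}]$ to factor, i.e.\ some independence of the two families; neither you nor the paper records this.)

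The $T_1$ bullet is where you part company with the statement, and your diagnosis is correct, but you should say plainly that what you then prove is not what the proposition asserts. The proposition claims the marginal identity $EZ_j(i)\overline{Z_{j'}(i)}=c_j(i)\delta_{jj'}$ (an entry of $\varSigma$) is equivalent to $\overline{b}_j^{\prime}\varTheta_{kk}b_{j'}=0$ for $j\neq j'$ (an entry of $\varTheta$); since $T_1$ constrains only the diagonal blocks of $\varTheta$, this pattern is not inversion-stable and the stated equivalence fails. Concretely, with $n=p=2$ and $\varTheta=\left(\begin{smallmatrix}I&C\\ C^{\prime}&I\end{smallmatrix}\right)$ for a small $C$ such that $CC^{\prime}$ and $C^{\prime}C$ do not commute, $\varTheta$ is $T_1$ in every orthonormal basis, yet the diagonal blocks of $\varSigma$, namely $(I-CC^{\prime})^{-1}$ and $(I-C^{\prime}C)^{-1}$, admit no common orthonormal eigenbasis, so no choice of $\{b_j\}$ makes the $Z_j(i)$ pairwise uncorrelated at each row index. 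Your repair --- reading the vanishing of $A_{jj'}(k,k)$ as \emph{conditional} (partial) uncorrelatedness of $Z_j(k)$ and $Z_{j'}(k)$ given the remaining coordinates, via the Gaussian precision-matrix identity --- is sound and is probably what the first bullet ought to say, but as written your argument establishes an amended proposition rather than the stated one. The paper's own proof does not resolve this either: its table merely restates the $T_1$ condition on $\varTheta$ without ever touching $\varSigma$.
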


\begin{proof}[Proof of Propositions \ref{prop: pdf} and \ref{prop: sd}]
    To make this clear, let us vectorise an $n\times p$ matrix population $ X = [\textrm{x}_1; \textrm{x}_2; \dots; \textrm{x}_p]$ by row vectors as ${\operatorname{vec}} ( X') = (\textrm{x}_1, \textrm{x}_2, \dots, \textrm{x}_p)'$ and assume its covariance matrix $ \varSigma = \operatorname{Cov}({\operatorname{vec}} ( X'))$ exists and is positive definite. Then block the $np\times np$ precision matrix $\varTheta= \varSigma^{-1}$ according to \eqref{eq: block matrix}, where the diagonal blocks $ \varTheta_{ii}$($i=1,2,\dots,n$) are positive definite matrices of size $p$. Otherwise, $\operatorname{vec}(X')$ may fail to be a multivariate normal distribution. From the classical results of the spectral decomposition for positive definite matrices, we can read from Table \ref{tab: equivalent conditions} the nested diagonalisation properties. 

    \begin{table}[!h]
	\caption{Comparison of $T_{1},T_{1\frac{1}{2}},T_{2}$, $T_{3}$.}
	\centering
        \begin{tabular}{ccl}
	\label{tab: equivalent conditions}\\
        \hline
		$T_{1}$ & $\Leftrightarrow$  & $ \varTheta = \sum_{j=1}^{p}\sum_{j^{\prime}=1}^{p}  A_{jj^{\prime}} \otimes  B_{jj^{\prime}}$ where $ B_{jj^{\prime}}={ b_{j}} \overline b_{j^{\prime}}^{\prime}$.\\
		&  $\Leftrightarrow$   & $ B_{jj^{\prime}} = { b_{j}} \overline b_{j^{\prime}}^{\prime}$ consists of common eigenvectors ${ b_{j}}$ of $ \varTheta_{kk}$, \\ 
		& & independent of $k$, corresponding to eigenvalues $ A_{jj^{\prime}}(k,k)$.\\	
		\hline
		$T_{1\frac{1}{2}}$ & $\Leftrightarrow$ & $ \varTheta = \sum_{j=1}^{p}  A_{jj} \otimes  B_{jj}$ where $ B_{jj} = { b_{j}} \overline b_{j}^{\prime}$.\\
		&  $\Leftrightarrow$   & $ B_{jj} = { b_{j}} \overline b_{j}^{\prime}$ consists of common eigenvectors ${ b_{j}}$ of $ \varTheta_{kl}$, \\
		& & independent of $k,l$, corresponding to eigenvalues $ A_{jj}(k,l)$.\\	
		\hline 
		$T_{2}$ & $\Leftrightarrow$ & $ \varTheta = \sum_{i=1}^{n}\sum_{j=1}^{p} \gamma_{ij}  A_{i} \otimes  B_{j}$ where $ A_{i} =  a_{i}{ \overline a_{i}}^{\prime}$ and $ B_{j} = { b_{j}} \overline b_{j}^{\prime}$\\
		& $\Leftrightarrow$ & $ A_{i} =  a_{i}{\overline  a_{i}}^{\prime}$, $ B_{j} = { b_{j}} \overline b_{j}^{\prime}$ consists of eigenvectors $ a_{i}\otimes { \overline b_{j}}$ of $ \varTheta$,\\
		& & corresponding to eigenvalues $\gamma_{ij}$.\\
		\hline
		$T_{3}$ & $\Leftrightarrow$ & $ \varTheta=  \varPhi^{-1} \otimes  \varPsi^{-1}$ and $ \varPhi^{-1} = \sum_{i=1}^{n} \alpha_{i}  a_{i}{ \overline a_{i}}^{\prime}$, $ \varPsi^{-1} = \sum_{j=1}^{p} \beta_{j}{ b_{j}}  \overline b_{j}^{\prime}$.\\ 
		& $\Leftrightarrow$ & $ a_{i}\otimes{ \overline b_{j}}$ are eigenvectors of $ \varTheta$, corresponding to eigenvalues $\alpha_{i}\beta_{j}$.\\
		\hline		       
        (**) & $\Leftrightarrow$ & there exists column random vectors $X$ and $Y$ such that \\
        & & $Z = XY$ and $\varTheta= A \otimes B$ where $A = E X\overline{X}^{\prime}$, $B = E Y\overline{Y}^{\prime}$.\\ 
		\hline		
		\end{tabular}
    \end{table}

    Thus, Proposition \ref{prop: sd} is proved. Proposition \ref{prop: pdf} is a direct consequence from the fact that $\operatorname{tr} ( A X B{ X}') = {\operatorname{vec}( X')}^{\prime}( A \otimes  B) \operatorname{vec}( X' )$ and the assumption that $\operatorname{vec}(X')$ is multivariate normal.
\end{proof}

\section{The Non-central Distribution of Quadratic Forms}
In this and the following two sections, we will study the distribution of the largest and the smallest root of the sample covariance from a matrix normal population $T_1$, as a generalisation of incomplete Gamma integrals to the non-central settings.

\begin{theorem} \label{thm: main theorem}  Suppose $ X$ is an $n\times p$  real matrix according to the matrix normal distribution $T_1$ 
and $ B = ( b_{1}, b_{2},\dots, b_p)$,
Let $ M$ be a fixed real $n\times p$ matrix and $ S =  (X +  M)'(X +  M)$. Assume that the diagonals in the $p\times p$ real matrix $ U = (u_{ij})$ with $u_{ij} = \operatorname{tr}( A_{ij})$ are all positive, that is, $u_{ii}>0 \, (i=1,2,\dots,p)$. Then the probability density distribution of $ S$ depends only on $ T 
= (t_{ij})$ with 
    $t_{ij} = \operatorname{tr}( B_{ij}  S)$ when $n > p - 1$, and is 
        \begin{eqnarray}
         \frac{| \varTheta_{1}|^{\frac{1}{2}}}{2^{\frac{np}{2}}\varGamma_{p}(\frac{n}{2})} \operatorname{etr} \left(-\frac{1}{2}  U T \right) | T|^{\frac{n-p-1}{2}}          \text{ when } {M}=0; \\
                \times \operatorname{etr} \left(-\frac{1}{2}  \varOmega\right){}_{0}F_{1}\left(\frac{n}{2};\frac{1}{4} \varDelta  T\right) \text{ when } {M}\neq0;
        \end{eqnarray}
        where $\varOmega = \sum_{i,j=1}^{p} B_{ij}  M^{\prime}   A_{ij}  M$ and $ \varDelta = \sum_{i,j,k,l=1}^{p} B'_{ij} M'A'_{ij}A_{kl} M B_{kl}$.
\end{theorem}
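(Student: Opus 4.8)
The plan is to obtain the density by removing the mean, changing variables from $X$ to $S$, and then averaging out the redundant directions over a Stiefel manifold, so that the Gaussian weight turns into the Wishart-type factor and the non-centrality produces the ${}_0F_1$. First I would set $Y = X + M$, so that $S = Y'Y$ and the shift has unit Jacobian. By Proposition~\ref{prop: pdf} the density of $Y$ is $\frac{|\varTheta_{1}|^{1/2}}{(2\pi)^{np/2}}\operatorname{etr}(-\tfrac12 Q(Y-M))$, where I abbreviate the quadratic form $Q(Z) = \operatorname{vec}(Z')'\varTheta_{1}\operatorname{vec}(Z') = \sum_{j,j'} (Zb_{j})'A_{jj'}(Zb_{j'})$, using $\operatorname{tr}(AZBZ') = \operatorname{vec}(Z')'(A\otimes B)\operatorname{vec}(Z')$ together with $B_{jj'} = b_{j}b_{j'}'$. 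Expanding $Q(Y-M) = Q(Y) - 2\langle Y,M\rangle + Q(M)$, with the associated bilinear form $\langle Y,M\rangle = \sum_{j,j'}(Yb_{j})'A_{jj'}(Mb_{j'})$, isolates three pieces: a central quadratic in $Y$, a cross term linear in $Y$, and the constant $Q(M)$. The constant is already in final form, since $Q(M) = \sum_{j,j'}(Mb_{j})'A_{jj'}(Mb_{j'}) = \operatorname{tr}\varOmega$, which supplies the factor $\operatorname{etr}(-\tfrac12\varOmega)$.

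Next I would pass to $S$. Assuming $n > p-1$, so that $Y$ has full column rank almost surely, I write the polar decomposition $Y = H_{1} S^{1/2}$ with $H_{1}$ ranging over the Stiefel manifold $\{H_{1} : H_{1}'H_{1} = I_{p}\}$ and $S = Y'Y$; the standard volume element $(dY) = 2^{-p}|S|^{(n-p-1)/2}(dS)(H_{1}'dH_{1})$ already yields the Jacobian $|S|^{(n-p-1)/2}$, which equals $|T|^{(n-p-1)/2}$ because $T = B'SB$ with $B$ orthogonal. It then remains to integrate the exponential over $H_{1}$. For the central term I would exploit the $T_{1}$ structure of Table~\ref{tab: equivalent conditions}: since $\{b_{j}\}$ are common eigenvectors of the diagonal blocks, each $A_{jj'}$ with $j\ne j'$ vanishes on its diagonal, and the integration over $H_{1}$ should collapse the direction-dependent part of $Q(H_{1}S^{1/2})$, leaving $\operatorname{etr}(-\tfrac12 UT)$ with $u_{ij} = \operatorname{tr}(A_{ij})$. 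For the cross term, which is linear in $H_{1}$, say of the form $\operatorname{etr}(C'H_{1})$ with $C$ built from $M$ and the $A_{jj'}$, I would invoke the orthogonal-group integral representation of the hypergeometric function introduced just before Lemma~\ref{lem: constantine}, in the guise $\int \operatorname{etr}(C'H_{1})(H_{1}'dH_{1}) = \operatorname{Vol}\cdot {}_0F_{1}(\tfrac n2; \tfrac14 C'C)$, and identify $C'C$ with $\varDelta T$, producing ${}_0F_{1}(\tfrac n2; \tfrac14\varDelta T)$. Collecting constants with $\operatorname{Vol}(\{H_{1}'H_{1} = I_{p}\}) = 2^{p}\pi^{np/2}/\varGamma_{p}(\tfrac n2)$ converts the prefactor $\frac{|\varTheta_{1}|^{1/2}}{(2\pi)^{np/2}}2^{-p}$ into exactly $\frac{|\varTheta_{1}|^{1/2}}{2^{np/2}\varGamma_{p}(n/2)}$, as claimed.

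The step I expect to be the main obstacle is the evaluation of the orthogonal integral of the central quadratic term. The cross term is linear in $H_{1}$ and is handled verbatim by the ${}_0F_{1}$ representation, but $Q(H_{1}S^{1/2})$ is genuinely of second order in $H_{1}$, and a generic quadratic exponent integrates to a zonal-polynomial series rather than a closed exponential. Showing that it instead collapses to the clean factor $\operatorname{etr}(-\tfrac12 UT)$ is the crux, and it is precisely here that the vanishing-diagonal property of the off-diagonal blocks $A_{jj'}$ ($j\ne j'$) and the reduction of the surviving contributions to the traces $u_{ij} = \operatorname{tr}(A_{ij})$ must be leveraged; one likely needs to use the $T_{1}$ diagonalisation to decouple the $j$-indices before integrating $H_{1}$, or to separate the exponent so that only a term constant on the fibre survives. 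Once this identity is in hand, the three pieces combine to give both displayed formulas simultaneously, the first being the specialisation $M = 0$ (so that $\varOmega = 0$, $\varDelta = 0$, and ${}_0F_{1} = 1$) of the second.
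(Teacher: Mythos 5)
Your treatment of the non-central part mirrors the paper's: both pass to an orthogonal/Stiefel decomposition $X=HZ$ (your $Y=H_1S^{1/2}$), pick up the Jacobian $2^{-p}|S|^{(n-p-1)/2}$, split off $\operatorname{etr}(-\tfrac12\varOmega)$ from the constant term, and convert the cross term, linear in $H_1$, into ${}_0F_1\bigl(\tfrac n2;\tfrac14\varDelta T\bigr)$ via the standard orthogonal-group integral. So that half of your plan is sound and matches the paper.

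The genuine gap is exactly the step you flag as ``the crux'' and then leave open: the claim that $\int\operatorname{etr}\bigl(-\tfrac12 Q(H_1S^{1/2})\bigr)$ over the Stiefel manifold collapses to $\operatorname{etr}(-\tfrac12 UT)$. As you yourself observe, a quadratic exponent in $H_1$ generically integrates to a zonal-polynomial series, not a closed exponential, so without an actual mechanism this is an assertion, not a proof --- and it is the entire content of the central case, from which the theorem's main claim (that the density of $S$ depends only on $T$) follows. The paper closes this step differently: it does not attempt the Stiefel integral of the quadratic term at all. Instead it sets $Y=XB$ and invokes Khatri's (1966) result on quadratic forms in normal vectors, representing the contribution of each form $y_i'A_{ij}y_j$ as a two-matrix-argument function ${}_0F_0\bigl(I-\tfrac{q_{ij}}2A_{ij},\,q_{ij}^{-1}y_iy_j'\bigr)$, then using $y_j'y_i=t_{ij}=\operatorname{tr}(B_{ij}S)$ together with ${}_0F_0(X,cY)={}_0F_0(cX,Y)$ and ${}_0F_0(X,I)=\operatorname{etr}(X)$ to cancel the auxiliary parameters $q_{ij}$ and land on $\operatorname{etr}(-\tfrac12 UT)\,|T|^{(n-p-1)/2}$. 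If you want to complete your argument you would need either to import that quadratic-forms machinery or to supply an independent proof that the $T_1$ vanishing-diagonal structure forces the Stiefel average to degenerate to the single trace term; as written, the central case --- and hence both displayed formulas --- is not established. A secondary concern, worth a sentence in any write-up, is that even granting both separate evaluations, the Stiefel integral of the \emph{product} of the quadratic and linear exponentials does not automatically factor into the product of the two averages; the paper sidesteps this by proving the central density first and then adjoining the non-central factor, and you should make the same ordering explicit.
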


\begin{proof}[Proof of Theorem \ref{thm: main theorem}]
        Suppose the central part holds for $n > p-1$. Decomposing $ X =  H  Z$ where $ Z$ is upper-triangular and $ H' H =  I_p$. Extend $H$ to an $n\times n$ orthogonal matrix $K = [H,H_\perp]$ so that
	   \begin{equation*}
	       \begin{aligned}
		(dX) = 2^{-p} |Z'Z|^{\frac{n-p-1}{2}} \cdot (dZ'Z) \cdot (dK).
		\end{aligned}
	   \end{equation*}
    Thereby, we could rewrite \eqref{eq: T1 d} according to \cite{james1961zonal} as 
    \[\begin{aligned}
        \operatorname{etr} \left(-\frac{1}{2}  \varOmega\right)\int_{O(n)}\operatorname{etr}\left(-\frac{1}{2} \sum_{i,j=1}^{k}( A_{ij} M B_{ij})Z'H' \right) (d K) \\
        = \operatorname{etr} \left(-\frac{1}{2}  \varOmega\right){}_{0}F_{1}\left(\frac{n}{2};\frac{1}{4}\sum_{i,j,k,l=1}^{k}( B'_{ij} M'A'_{ij}A_{kl} M B_{kl}) Z'Z\right).    \end{aligned}\]
    Thus, if the central part is true, Theorem \ref{thm: main theorem} is consequently proved by integrating over the orthogonal group $O(n)$. 
    
    From the result of \cite{khatri1966} on the quadratic forms in normal vectors, we can reduce the central part of Theorem \ref{thm: main theorem} to the $\frac{1}{2}p(p+1)$ independent quadratic forms by introducing $ Y =  X B = ( y_{1}, y_{2}\dots,  y_{p})$
    \begin{equation*}
    \begin{aligned}
        {y}_1'{A}_{11}{y}_1, {y}_1'{A}_{12}{y}_2, \dots, {y}_1'{A}_{1p}{y}_p,\\
        {y}_2'{A}_{22}{y}_2, \dots, {y}_2'{A}_{2p}{y}_p, \\
        \vdots\qquad \\
        {y}_p'{A}_{pp}{y}_p,
    \end{aligned}
\end{equation*}
In fact, we have for each ${y}_i'{A}_{ij}{y}_j$ the contribution to the probability density function
\[\begin{aligned}
    {_{0}}F_{0}\left( I - \frac{q_{ij}}{2} A_{ij}, q_{ij}^{-1} y_{i} y_{j}^{\prime}\right) = {_{0}}F_{0}\left( I - \frac{q_{ij}}{2} A_{ij}, q_{ij}^{-1} y_{j}^{\prime} y_{i}\right).\end{aligned}\]
However, we may find $ y_{j}^{\prime} y_{i} = t_{ij} = \operatorname{tr} ( B_{ij} S)$ so that from these properties of hypergeometric functions
\[\begin{aligned}
    {}_{0}F_{0} ( X,c Y) = {}_{0}F_{0} (c X, Y)\\
    {}_{0}F_{0} ( X,  I) = {}_{0}F_{0} ( X) = \operatorname{etr}( X), 
\end{aligned}\]
the terms concerning $q_{ij}$ cancel out. This yields the desired form.
\end{proof}

\section{The Latent Roots with Unequal Covariances}

The latent roots with unequal precision matrices $\varTheta_1 \neq \varTheta_2$ are fundamental to the determination of the null distribution in the Behrens-Fisher problem. 

\begin{theorem} Let $X_{1}, X_{2}$ be two matrix normal population $T_{1}$ with the common normalising matrix $B$ and $\varTheta_{1}, \varTheta_{2}, U_{1},U_{2}$ defined similarly as Theorem \ref{thm: main theorem}. Let $S_{1} = X_1^{\prime}X_1, S_{2} = X_2^{\prime}X_2$ and $S = S_1S_2^{-1}$.  Then the probability density function for $S$ is
    \begin{equation}
    \begin{aligned}
    p( S) = \frac{| \varTheta_{1}|^{\frac{1}{2}}| \varTheta_{2}|^{\frac{1}{2}}\varGamma_p(\frac{n_1+n_2+p+1}{2})}{2^{(n_1 + n_2)p}{B}_{p}(\frac{n_1}{2},\frac{n_2}{2})\varGamma_p(\frac{p+1}{2})|U_2|^{\frac{n_1+n_2}{2}}}|S|^{\frac{n_1-p-1}{2}}\\
    \times
    {_{1}}F_{1}\left(\frac{n_1 + n_2}{2}; \frac{p+1}{2};-\frac{1}{2}  (U_1U_2^{-1} + S)\right),
\end{aligned}
\label{eq: S1S0-1 unequal}
\end{equation}
\end{theorem}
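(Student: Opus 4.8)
The plan is to push the problem, via Theorem~\ref{thm: main theorem}, into a statement about two independent Wishart-type matrices and then reduce everything to a single Laplace-type integral over the ``denominator'' matrix. First I would apply the central case ($M=0$) of Theorem~\ref{thm: main theorem} separately to $X_1$ and $X_2$. Writing $T_i=B'S_iB$, independence of $X_1,X_2$ gives a product density whose factors are the Wishart-type kernels $\tfrac{|\varTheta_i|^{1/2}}{2^{n_ip/2}\varGamma_p(n_i/2)}\operatorname{etr}(-\tfrac12 U_iT_i)|T_i|^{(n_i-p-1)/2}$. A useful observation is that the $T_1$-structure forces $A_{jk}$ to vanish on its diagonal for $j\neq k$, so $u_{jk}=\tr(A_{jk})=0$ off the diagonal and each $U_i$ is in fact a positive \emph{diagonal} matrix; in particular $U_1U_2^{-1}$ is diagonal and its eigenvalues are well defined, which is what lets $-\tfrac12(U_1U_2^{-1}+S)$ enter a matrix-argument ${}_1F_1$. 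Since $B$ is orthogonal, $S=S_1S_2^{-1}$ corresponds to $T_1T_2^{-1}$ in the $B$-frame, and the latent roots of $S$ are exactly those of the symmetric matrix $G=T_2^{-1/2}T_1T_2^{-1/2}$; I would carry out the computation for $G$ and transcribe back through $B$ at the end, using that determinants and ${}_1F_1$ depend only on eigenvalues.

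Next I would change variables $(T_1,T_2)\mapsto(G,T_2)$ through $T_1=T_2^{1/2}GT_2^{1/2}$, with $(dT_1)=|T_2|^{(p+1)/2}(dG)$. Collecting the exponent of $|T_2|$, namely $\tfrac{n_1-p-1}{2}+\tfrac{n_2-p-1}{2}+\tfrac{p+1}{2}=\tfrac{n_1+n_2-p-1}{2}$, and integrating out $T_2$ leaves
\[
p(G)=C\,|G|^{\frac{n_1-p-1}{2}}\int_{T_2>0}|T_2|^{\frac{n_1+n_2-p-1}{2}}\operatorname{etr}\!\left(-\tfrac12 U_2T_2-\tfrac12 U_1T_2^{1/2}GT_2^{1/2}\right)(dT_2),
\]
where $C=|\varTheta_1|^{1/2}|\varTheta_2|^{1/2}\big/\big(2^{(n_1+n_2)p/2}\varGamma_p(\tfrac{n_1}{2})\varGamma_p(\tfrac{n_2}{2})\big)$.

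The heart of the argument, and the step I expect to be the main obstacle, is this $T_2$-integral. The difficulty is the cross term $\tr(U_1T_2^{1/2}GT_2^{1/2})$: because $U_1$ and $G$ do not commute, the symmetric square root cannot be absorbed into a single linear exponent $\operatorname{etr}(-T_2Z)$, so the naive Gamma integral---which would return only the matrix-$F$ factor $|U_2+U_1G|^{-(n_1+n_2)/2}$ and hence miss the hypergeometric correction---is unavailable. My plan is to expand $\operatorname{etr}(-\tfrac12 U_1T_2^{1/2}GT_2^{1/2})$ in James' zonal polynomials and integrate termwise against the kernel $|T_2|^{(n_1+n_2-p-1)/2}\operatorname{etr}(-\tfrac12 U_2T_2)$ by the Laplace transform of a zonal polynomial; the Pochhammer factors $(\tfrac{n_1+n_2}{2})_\kappa$ it produces, together with $\varGamma_p(\tfrac{n_1+n_2}{2})\,|\tfrac12 U_2|^{-(n_1+n_2)/2}$, resum into a confluent hypergeometric series. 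Equivalently the same integral can be presented through Herz's inverse Gamma integral \eqref{eq: gamma integral inv} taken with parameter $b=\tfrac{p+1}{2}$, which is precisely what installs $\tfrac{p+1}{2}$ as the lower index; a concluding application of Kummer's relation ${}_1F_1(a;c;X)=\operatorname{etr}(X)\,{}_1F_1(c-a;c;-X)$ and a redistribution of the $|U_2|^{-(n_1+n_2)/2}$ factor across the argument would bring the answer to the stated form ${}_1F_1\big(\tfrac{n_1+n_2}{2};\tfrac{p+1}{2};-\tfrac12(U_1U_2^{-1}+S)\big)$.

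Finally I would collect the constant. The Laplace step contributes $\varGamma_p(\tfrac{n_1+n_2}{2})\,|U_2|^{-(n_1+n_2)/2}$ and a power of two; combined with $C$ and with the $\varGamma_p$-ratios emerging from the hypergeometric normalization, the identity $B_p(\tfrac{n_1}{2},\tfrac{n_2}{2})=\varGamma_p(\tfrac{n_1}{2})\varGamma_p(\tfrac{n_2}{2})/\varGamma_p(\tfrac{n_1+n_2}{2})$ and the appearance of $\varGamma_p(\tfrac{n_1+n_2+p+1}{2})/\varGamma_p(\tfrac{p+1}{2})$ reproduce the prefactor in \eqref{eq: S1S0-1 unequal}. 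Transcribing $G$ and $U_1U_2^{-1}$ back through $B$ (which changes neither the determinants nor the eigenvalue-dependent ${}_1F_1$) then yields the asserted density of $S$. The genuinely delicate point is the zonal-polynomial integration; the remaining steps are Jacobian bookkeeping and manipulation of multivariate Gamma and Beta identities.
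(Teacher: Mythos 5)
Your overall skeleton matches the paper's: apply the central case of Theorem~\ref{thm: main theorem} to each population, pass to the ratio by a change of variables with Jacobian $|T_2|^{(p+1)/2}$, and integrate out the second matrix. (Your observation that the $T_1$ structure makes $U_1,U_2$ diagonal is correct and is not even remarked on in the paper.) But the step you yourself identify as the heart of the argument does not go through as proposed. The termwise Laplace transform of zonal polynomials, $\int_{T>0}\operatorname{etr}(-ZT)|T|^{a-\frac{p+1}{2}}C_\kappa(TY)\,(dT)=(a)_\kappa\varGamma_p(a)|Z|^{-a}C_\kappa(YZ^{-1})$, requires the polynomial's argument to be of the form $TY$ with $Y$ fixed; your cross term gives $C_\kappa\bigl(U_1T_2^{1/2}GT_2^{1/2}\bigr)$, whose eigenvalues are those of $T_2^{1/2}GT_2^{1/2}U_1$ --- a matrix that is similar to, but not equal to, $T_2\cdot(GU_1)$ --- so it cannot be written as $C_\kappa(T_2Y)$ for any fixed $Y$ and the termwise integration is not available. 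The Pochhammer resummation you describe therefore never gets off the ground, and no amount of Kummer transformation afterwards repairs it.

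Moreover, this obstacle is self-inflicted: it is created by your choice of the symmetrized ratio $G=T_2^{-1/2}T_1T_2^{-1/2}$. The paper keeps the asymmetric ratio $S=S_1S_2^{-1}$, for which $\operatorname{tr}(U_1T_1)=\operatorname{tr}(U_1ST_2)$ is \emph{linear} in $T_2$, so the two exponentials merge into the single factor $\operatorname{etr}\bigl(-\tfrac12(U_2+U_1S)T_2\bigr)$ in \eqref{eq: S and S0 a} and the $T_2$-integration is a single Gamma/Beta-type integral with no zonal expansion needed. Your parenthetical worry that this ``naive'' route would ``miss the hypergeometric correction'' is exactly backwards relative to the paper: the paper's route is the naive one, and whatever hypergeometric factor appears in \eqref{eq: S1S0-1 unequal} is produced by that single integration (via the listed ${}_1F_1$/${}_0F_0$ identities), not by a zonal-polynomial correction to it. To salvage your write-up, either switch to the asymmetric ratio before integrating, or, if you insist on the symmetric form, introduce an explicit average over the orthogonal group so that the cross term becomes a genuine two-matrix-argument hypergeometric function --- but then the answer you obtain will be a ${}_pF_q$ of two matrix arguments, not the one-argument ${}_1F_1$ claimed in the statement.
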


\begin{proof}[Proof of Theorem \ref{eq: S1S0-1 unequal}]
    In general, in order to determine the central distribution of $ S =  S_{1} S_{2}^{-1}$, one should observe a fact that in our case the mean is zero. A direct computation yields the joint distribution of $ S_{1} =  B' T_1  B$ and $ S_{2} =  B' T_2  B$ is
\begin{equation}
    \begin{aligned}
    p( S_1, S_2) = \frac{| \varTheta_{1}|^{\frac{1}{2}}| \varTheta_{2}|^{\frac{1}{2}}}{2^{(n_1+n_2)p}\varGamma_{p}(\frac{n_1}{2})\varGamma_{p}(\frac{n_2}{2})}\operatorname{etr} \left(-\frac{1}{2}   U_{2}  T_{2}\right)| T_{2}|^{\frac{n_2-p-1}{2}}\\
    \times \operatorname{etr} \left(-\frac{1}{2} U_1  T_1 \right)| T_1|^{\frac{n_1-p-1}{2}}.
\end{aligned}
\label{eq: S1 and S0 unequal a}
\end{equation}
The transformation $( S_{1}, S_{2}) \mapsto ( S_{1} S_{2}^{-1}, S_{2})$ has the Jacobian $| S_{2}|^{-{(p+1)}/{2}}$. In this situation, the joint distribution of $ S$ and $ S_{2}$ derived from \eqref{eq: S1 and S0 unequal a} is
\begin{equation}
    \begin{aligned}
     p( S , S_2) = \frac{| \varTheta_{1}|^{\frac{1}{2}}| \varTheta_{2}|^{\frac{1}{2}}}{2^{(n_1+n_2)p}\varGamma_{p}(\frac{n_1}{2})\varGamma_{p}(\frac{n_2}{2})} \operatorname{etr} \left(-\frac{1}{2}  (U_2 + U_1S)T_{2}\right)\\
     \times | S|^{\frac{n_1-p-1}{2}}| T_{2}|^{\frac{n_1+n_2 - p- 1}{2}}.
\end{aligned}
\label{eq: S and S0 a}
\end{equation}
By the Beta integral and these properties of hypergeometric functions
\begin{equation*}
    \begin{aligned}
        {_{1}}F_{1}(a;a; X) = {_{0}}F_{0}( X) = \operatorname{etr} ( X), \\
        \int_{ S> 0}| S|^{a-\frac{p+1}{2}}|I- S|^{b-\frac{p+1}{2}}{_{0}}F_{0}(R S)dS = {\rm B}(a,b)^{-1}{_{1}}F_{1}(a;b; R),
    \end{aligned}
\end{equation*}
integrating with respect to $ S_{2}$ in \eqref{eq: S and S0 a}, we have the distribution of $ S$,
\begin{equation}
    \begin{aligned}
    p( S) = \frac{| \varTheta_{1}|^{\frac{1}{2}}| \varTheta_{2}|^{\frac{1}{2}}\varGamma_p(\frac{n_1+n_2+p+1}{2})}{2^{(n_1 + n_2)p}{B}_{p}(\frac{n_1}{2},\frac{n_2}{2})\varGamma_p(\frac{p+1}{2})|U_2|^{\frac{n_1+n_2}{2}}}|S|^{\frac{n_1-p-1}{2}}\\
    \times
    {_{1}}F_{1}\left(\frac{n_1 + n_2}{2}; \frac{p+1}{2};-\frac{1}{2}  (U_1^{-1}U_2 + S)\right).
\end{aligned}
\label{eq: S1S0-1 unequal a}
\end{equation}
\end{proof}

\section{The Non-Central Latent Roots with Unknown Covariance}
In contrast, the non-central latent roots with unknown (known $A_{ij}$ but unknown $B_{ij}$) precision matrix $\varTheta$ are used to derive the alternative distribution in multivariate analysis of variance. 
\begin{theorem} Let  $ X_{1}$ and $ X_{2}$ be two independent matrix normal populations $T_{1}$ with common precision matrix $\varTheta$, $M$ an arbitrary fixed $n_1\times p$ real matrix, $S_{1} = ( X_1 +  M)'( X_1 + M)$, and $ S_{2} =  X_2' X_2$. 
The joint distribution of latent roots $f_{1},f_{2},\dots, f_{p}$ of $ S =  S_1( S_1+ S_2)^{-1}$ is 
    \begin{equation}
            \begin{aligned}
    \frac{\pi^{p^{2}/2}| \varTheta|}{2^{(n_1 + n_2)p}B_{p}(\frac{n_1}{2},\frac{n_2}{2})\varGamma_{p}(\frac{p}{2})| U|^{\frac{n_1+n_2}{2}}}\prod_{i<j}^{p} (f_{i} - f_{j}) {| F|^{\frac{n_1}{2}}}{| I -   F|^{\frac{n_1 -p-1}{2}} }\\
    \times \operatorname{etr} \left(-\frac{1}{2}  \varOmega\right)
    {_{1}}F_{1}\left(\frac{n_1 + n_2}{2}; \frac{n_1}{2}; \Delta  U^{-1},  F\right),
\end{aligned}
    \end{equation}
where $\varOmega $ and $ \varDelta$ are defined in Theorem \ref{thm: main theorem}, $F = \operatorname{diag}(f_{1},f_{2},\dots,f_{p}), f_{1}>f_{2}>\dots > f_{p}$; elsewhere zero.

In particular, the probability distribution function for the largest root $f_1$ is 
             \begin{equation}
                 \begin{aligned}
                {P}(f_1 < x) = \frac{| \varTheta|x^{\frac{n_1p}{2}}}{2^{(n_1 + n_2)p}B_{p}(\frac{n_1}{2},\frac{n_2}{2})B_{p}(\frac{n_1}{2},\frac{p+1}{2})| U|^{\frac{n_1+n_2}{2}}} \\\times  \operatorname{etr} \left(-\frac{1}{2}  \varOmega\right){_{2}}F_{1}\left(a, b; c; \Delta  U^{-1},  \Delta  U^{-1}  R\right).\end{aligned}
             \end{equation}
                where $a = -\frac{1}{2}(n_2 - p -1), b = \frac{1}{2}(n_1 + n_2)$, $c =  \frac{1}{2}(n_2 + p +1)$.                Similarly, $1$ $-$ probability distribution function for the smallest root $f_p$ is
                \begin{equation}
                    \begin{aligned}
                 1 - {P}(f_p \leq y) = P(f_p > y)  = \frac{| \varTheta|(1-y)^{\frac{n_2p}{2}}}{2^{(n_1 + n_2)p}B_{p}(\frac{n_1}{2},\frac{n_2}{2})B_{p}(\frac{n_2}{2},\frac{p+1}{2})| U|^{\frac{n_1+n_2}{2}}} \\\times  \operatorname{etr} \left(-\frac{1}{2}  \varOmega\right){_{2}}F_{1}\left(a,b;c; \Delta  U^{-1},  \Delta  U^{-1}  R\right).\end{aligned}
                \end{equation}
                where $a = -\frac{1}{2}(n_1 - p -1), b = \frac{1}{2}(n_1 + n_2)$, $c =  \frac{1}{2}(n_1 + p +1)$.
\label{thm: non-central means}
\end{theorem}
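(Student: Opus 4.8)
The plan is to realise $S = S_{1}(S_{1}+S_{2})^{-1}$ as a non-central matrix Beta variable and then strip off its eigenvalues, building throughout on Theorem~\ref{thm: main theorem}. First I would assemble the joint density of $(S_{1},S_{2})$. Because $X_{1}$ and $X_{2}$ are independent and share the precision matrix $\varTheta$, they share the matrix $U$; Theorem~\ref{thm: main theorem} supplies the non-central density of $S_{1}$, carrying the factor $\operatorname{etr}(-\tfrac12\varOmega)\,{}_{0}F_{1}(\tfrac{n_{1}}{2};\tfrac14\varDelta S_{1})$ alongside $\operatorname{etr}(-\tfrac12 U S_{1})|S_{1}|^{(n_{1}-p-1)/2}$, and the central density of $S_{2}$, namely $\operatorname{etr}(-\tfrac12 U S_{2})|S_{2}|^{(n_{2}-p-1)/2}$ (here I abbreviate the $B$-coordinates $T_{i}=B'S_{i}B$ by $S_{i}$). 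Their product, with prefactor $|\varTheta|^{1/2}\cdot|\varTheta|^{1/2}=|\varTheta|$, is the joint law, which accounts for the $|\varTheta|$ appearing in the statement.

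Next I would pass to the matrix-Beta coordinates $V=S_{1}+S_{2}$ and $F=V^{-1/2}S_{1}V^{-1/2}$, whose eigenvalues are the $f_{i}$. The map $(S_{1},S_{2})\mapsto(F,V)$ has Jacobian $|V|^{(p+1)/2}$, the determinant powers split as $|V|^{(n_{1}+n_{2}-p-1)/2}|F|^{(n_{1}-p-1)/2}|I-F|^{(n_{2}-p-1)/2}$ (the usual matrix-Beta weights), and—crucially—the two exponential traces fuse into the single factor $\operatorname{etr}(-\tfrac12 UV)$ precisely because $U$ is common to both populations. The sole obstruction to separating variables is the non-central ${}_{0}F_{1}$, whose argument becomes $\tfrac14\varDelta V^{1/2}FV^{1/2}$ and so entangles $V$ with $F$. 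I would integrate out the scale $V>0$ against $\operatorname{etr}(-\tfrac12UV)|V|^{(n_{1}+n_{2}-p-1)/2}$: expanding ${}_{0}F_{1}$ in zonal polynomials and applying the Gamma integral \eqref{eq: gamma integral} termwise, together with the orthogonal average $\int_{O(p)}C_{\kappa}(AHBH')[dH]=C_{\kappa}(A)C_{\kappa}(B)/C_{\kappa}(I)$ that underlies the two-argument hypergeometric function of Section~2, converts the integral into ${}_{1}F_{1}(\tfrac{n_{1}+n_{2}}{2};\tfrac{n_{1}}{2};\varDelta U^{-1},F)$. The upper parameter $\tfrac{n_{1}+n_{2}}{2}$ is the power of $|V|$, the factor $U^{-1}$ is the $|U/2|^{-(n_{1}+n_{2})/2}$ produced by \eqref{eq: gamma integral}, and $\varDelta$ is thereby paired off to form $\varDelta U^{-1}$.

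This yields the matrix density of $F$, and the joint law of the roots follows from the eigen-decomposition $F=H\Lambda H'$, $\Lambda=\operatorname{diag}(f_{1},\dots,f_{p})$: the Jacobian contributes the Vandermonde $\prod_{i<j}(f_{i}-f_{j})$ and the constant $\pi^{p^{2}/2}/\varGamma_{p}(\tfrac{p}{2})$, while integrating the ${}_{1}F_{1}$ over $H\in O(p)$—again through the Section~2 averaging formula—reinstates the two-matrix-argument ${}_{1}F_{1}(\tfrac{n_{1}+n_{2}}{2};\tfrac{n_{1}}{2};\varDelta U^{-1},F)$ with $F=\Lambda$. Collecting the normalising constants reproduces the stated joint density.

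Finally, for the extreme roots I would not route through the joint eigenvalue density but integrate the matrix density of $F$ directly, since $\{f_{1}<x\}=\{F<xI\}$ and $\{f_{p}>y\}=\{I-F<(1-y)I\}$. Each probability is then an incomplete Beta integral of the type in Lemma~\ref{lem: davis}: integrating $|F|^{(n_{1}-p-1)/2}|I-F|^{(n_{2}-p-1)/2}\,{}_{1}F_{1}(\cdots;\varDelta U^{-1},F)$ over $0<F<R$ (with $R=xI$ for $f_{1}$, and the complementary region under $F\mapsto I-F$, $n_{1}\leftrightarrow n_{2}$, for $f_{p}$) produces a hypergeometric in the two arguments $\varDelta U^{-1}$ and $\varDelta U^{-1}R$ and supplies $|R|^{n_{1}/2}=x^{n_{1}p/2}$. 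Matching Lemma~\ref{lem: davis}—whereby the reducible upper parameter generated by the $|F|$-weight cancels the lower parameter of the integrand, collapsing the output from a ${}_{3}F_{2}$ to the ${}_{2}F_{1}$ shown—delivers the parameters $a,b,c$ recorded in the statement, with $a$ coming from the $|I-F|$ exponent, $b$ inherited from the ${}_{1}F_{1}$, and $c$ from the remaining weight plus $\tfrac{p+1}{2}$. The main obstacle throughout is the $V$-integration of the entangled ${}_{0}F_{1}$ in the second step; taming it rests entirely on the zonal-polynomial Laplace transform \eqref{eq: gamma integral} combined with the orthogonal-group average, which is exactly the mechanism that manufactures the two-matrix-argument hypergeometric functions in the conclusion.
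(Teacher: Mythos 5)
Your overall architecture coincides with the paper's: assemble the joint density of $(S_1,S_2)$ from Theorem~\ref{thm: main theorem}, reduce to a matrix Beta variable, pass to eigenvalues by an $O(p)$ average, and read off the extreme roots from the incomplete Beta integral of Lemma~\ref{lem: davis}. The genuine difference is the middle step. The paper first forms $S_1S_2^{-1}$ and integrates out $S_2$; there the noncentrality factor is ${}_{0}F_{1}(\tfrac{n_1}{2};\varDelta S\,T_2)$, \emph{linear} in the integration variable $T_2$ with a fixed coefficient, so the zonal Laplace transform applies termwise and yields a \emph{one}-argument matrix density proportional to $|S|^{(n_1-p-1)/2}|I-S|^{(n_2-p-1)/2}\,{}_{1}F_{1}\left(\tfrac{n_1+n_2}{2};\tfrac{n_1}{2};\varDelta U^{-1}S\right)$ after the further map $S\mapsto S(I+S)^{-1}$; the two-argument ${}_1F_1$ appears only afterwards, from a single $O(p)$ integration over the eigenvectors of $S$, and it is the one-argument matrix density that the paper feeds into Lemma~\ref{lem: davis} for the extreme roots.

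You instead pass directly to $(F,V)$ and integrate out $V$, and this is where the gap sits. The noncentrality argument becomes $\tfrac14\varDelta V^{1/2}FV^{1/2}$, and $C_{\kappa}(\varDelta V^{1/2}FV^{1/2})=C_{\kappa}(FV^{1/2}\varDelta V^{1/2})$ is \emph{not} of the form $C_{\kappa}(BV)$ for a fixed $B$ (already at degree one, $\operatorname{tr}(\varDelta V^{1/2}FV^{1/2})\neq\operatorname{tr}(\varDelta VF)$ in general), so the termwise use of \eqref{eq: gamma integral} is not licensed as you invoke it; moreover the $V$-integral alone cannot output ${}_{1}F_{1}(\,\cdot\,;\,\cdot\,;\varDelta U^{-1},F)$ as ``the matrix density of $F$'', since a two-argument hypergeometric depends only on the eigenvalues of $F$ and hence cannot be a density for the matrix $F$. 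The repair is an ordering one: perform the $O(p)$ average over the eigenvector matrix $H$ of $F=H\Lambda H'$ \emph{before} the $V$-integral, so that $\int_{O(p)}C_{\kappa}(V^{1/2}\varDelta V^{1/2}H\Lambda H')[dH]=C_{\kappa}(V^{1/2}\varDelta V^{1/2})C_{\kappa}(\Lambda)/C_{\kappa}(I)$, after which $C_{\kappa}(V^{1/2}\varDelta V^{1/2})=C_{\kappa}(V\varDelta)$ linearises the $V$-dependence and the Laplace transform goes through. As written you apply the orthogonal average twice (once inside the $V$-integration and once when you ``reinstate'' the two-argument function), and the confusion propagates to the last step, where Lemma~\ref{lem: davis} as stated integrates a one-argument ${}_{p}F_{q}(\cdots;AS)$ and does not cover your two-argument integrand. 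Your description of the parameter collapse is nonetheless correct: with $c=\tfrac{n_1}{2}$, $d=\tfrac{n_2}{2}$ the reducible pair cancels the lower parameter $\tfrac{n_1}{2}$, giving $a=-\tfrac12(n_2-p-1)$, $b=\tfrac12(n_1+n_2)$ and $c=\tfrac12(n_1+p+1)$ for the largest root, which suggests that the $c$ recorded in the theorem has $n_1$ and $n_2$ interchanged.
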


\begin{proof}[Proof of Theorem \ref{thm: non-central means}]
    In general, in order to determine the non-central distribution of $ S =  S_{1} S_{2}^{-1}$, one should observe the fact that in our case $ S_{2}$ is assumed central. A direct computation yields the joint distribution of $ S_{1} =  B' T_1  B$ and $ S_{2} =  B' T_2  B$ (module $\operatorname{etr} \left(-\frac{1}{2}  \varOmega\right)$) is
\begin{equation}
    \begin{aligned}
    p( S_1, S_2) = \frac{| \varTheta |}{2^{(n_1+n_2)p}\varGamma_{p}(\frac{n_1}{2})\varGamma_{p}(\frac{n_2}{2})}\operatorname{etr} \left(-\frac{1}{2}   U  T_{2}\right)| T_{2}|^{\frac{n_2-p-1}{2}} \\
    \times \operatorname{etr} \left(-\frac{1}{2} U  T_1 \right)| T_1|^{\frac{n_1-p-1}{2}}{_{0}}F_{1}\left(\frac{n_1}{2}; \varDelta  T_1 \right).
\end{aligned}
\label{eq: S1 and S0 a2}
\end{equation}
The transformation $( S_{1}, S_{2}) \mapsto ( S_{1} S_{2}^{-1}, S_{2})$ has the Jacobian $| S_{2}|^{-{(p+1)}/{2}}$. In this situation, the joint distribution of $ S$ and $ S_{2}$ derived from \eqref{eq: S1 and S0 a2} is
\begin{equation}
    \begin{aligned}
     p( S , S_2) = \frac{| \varTheta |}{2^{(n_1+n_2)p}\varGamma_{p}(\frac{n_1}{2})\varGamma_{p}(\frac{n_2}{2})} \operatorname{etr} \left(-\frac{1}{2}  U( S + I)  T_{2}\right)\\ \times | S|^{\frac{n_1-p-1}{2}} | T_{2}|^{\frac{n_1+n_2 - p- 1}{2}}{_{0}}F_{1}\left(\frac{n_1}{2}; \varDelta  S  T_{2}\right).
\end{aligned}
\label{eq: S and S0 a2}
\end{equation}
By Lemma 3 in I and these properties of hypergeometric functions
\begin{equation*}
    \begin{aligned}
        {_{1}}F_{1}(a;a; X) = {_{0}}F_{0}( X) = \operatorname{etr} ( X), \\
        \int_{ S> 0} \operatorname{etr} (-  A S)| S|^{a-\frac{n+1}{2}}{_{0}}F_{1}(b; B S)dS = \varGamma_{n}(a)| A|^{-a}{_{1}}F_{1}(a;b; B A^{-1}),
    \end{aligned}
\end{equation*}
integrating with respect to $ S_{2}$ in \eqref{eq: S and S0 a2}, we have the distribution of $ S$,
\begin{equation}
    \begin{aligned}
    p( S) = \frac{| \varTheta|}{2^{(n_1 + n_2)p}B_{p}(\frac{n_1}{2},\frac{n_2}{2})| U|^{\frac{n_1+n_2}{2}}}| I +   S|^{-\frac{n_1 + n_2}{2}} | S|^{\frac{n_1-p-1}{2}}\\
    \times 
    {_{1}}F_{1}\left(\frac{n_1 + n_2}{2}; \frac{n_1}{2}; \varDelta  U^{-1}( I+  S^{-1} )^{-1}\right),
\end{aligned}
\label{eq: S1S0-1 a2}
\end{equation}
If $ S =  S_1 ( S_1+ S_2)^{-1}$, then simple calculus yields the Jacobian of $S \mapsto S(I+S)^{-1}$ is $|I + S|^{-(p+1)}$ and 
\begin{equation}
    \begin{aligned}
    p( S) = \frac{| \varTheta|}{2^{(n_1 + n_2)p}B_{p}(\frac{n_1}{2},\frac{n_2}{2})| U|^{\frac{n_1+n_2}{2}}}| S|^{\frac{n_1-p-1}{2}} | I -  S|^{\frac{n_2-p-1}{2}}\\
    \times 
    {_{1}}F_{1}\left(\frac{n_1 + n_2}{2}; \frac{n_1}{2}; \varDelta  U^{-1} S\right),
\end{aligned}
\label{eq: S1S-1 a2}
\end{equation}

(1) Non-central latent roots. By integrating over the orthogonal group, we obtained the joint distribution of latent roots of the ratio $ S =  S_1 ( S_1+ S_2)^{-1}$

\begin{equation}
    \begin{aligned}
    \frac{\pi^{p^{2}/2}| \varTheta|}{2^{(n_1 + n_2)p}B_{p}(\frac{n_1}{2},\frac{n_2}{2})\Gamma_{p}(\frac{p}{2})| U|^{\frac{n_1+n_2}{2}}}\prod_{i<j}^{p} (f_{i} - f_{j}) {| F|^{\frac{n_1-p-1}{2}}}{| I -   F|^{\frac{n_1 -p-1}{2}} } \\
    \times {_{1}}F_{1}\left(\frac{n_1 + n_2}{2}; \frac{n_1}{2}; \varDelta  U^{-1},  F\right),
\end{aligned}
\label{eq: non-central means}
\end{equation}
where $ F = \operatorname{diag}(f_{1},f_{2},\dots,f_{p}), f_{1}>f_{2}>\dots > f_{p}$; elsewhere zero.

(2) The largest and the smallest latent root $f_1$ and $f_p$.
            Based on the non-central distribution, we are going to evaluate this integral 
            \[\begin{aligned}
                {P}( S <  R) = \frac{| \varTheta|}{2^{(n_1 + n_2)p}B_{p}(\frac{n_1}{2},\frac{n_2}{2})| U|^{\frac{n_1+n_2}{2}}}\int_{ 0}^{ R} | S|^{\frac{n_1-p-1}{2}} | I -  S|^{\frac{n_2-p-1}{2}}\\
    \times 
    {_{1}}F_{1}\left(\frac{n_1 + n_2}{2}; \frac{n_1}{2}; \Delta  U^{-1} S\right) d  S\\
     = \frac{| \varTheta|| R|^{\frac{n_1}{2}}}{2^{(n_1 + n_2)p}B_{p}(\frac{n_1}{2},\frac{n_2}{2})B_{p}(\frac{n_1}{2},\frac{p+1}{2})| U|^{\frac{n_1+n_2}{2}}} \\
     \times {_{2}}F_{1}\left(a, b; c; \Delta  U^{-1},  \Delta  U^{-1}  R\right).\end{aligned}\]
                where $a = -\frac{1}{2}(n_2 - p -1), b = \frac{1}{2}(n_1 + n_2)$, $c =  \frac{1}{2}(n_2 + p +1)$. Thereby, by setting $R = xI_{m}$ so that $f_{1} < x$ equivalent to $S < xI_{m}$, the probability distribution function for $f_1$ is 
             \[\begin{aligned}
                {P}(f_1 < x) = \frac{| \Sigma_1|x^{\frac{n_1p}{2}}}{2^{(n_1 + n_2)p}B_{p}(\frac{n_1}{2},\frac{n_2}{2})B_{p}(\frac{n_1}{2},\frac{p+1}{2})| U|^{\frac{n_1+n_2}{2}}} \\\times  {_{2}}F_{1}\left(a, b; c; \Delta  U^{-1},  \Delta  U^{-1}  R\right).\end{aligned}\]
                where $a = -\frac{1}{2}(n_2 - p -1), b = \frac{1}{2}(n_1 + n_2)$, $c =  \frac{1}{2}(n_2 + p +1)$.
Similarly, the smallest latent root $f_p$ is the largest latent root of $ I -  S$. By symmetry, we have the probability distribution function for $f_1$.
\end{proof}

\section{Conclusion}

In this work, we have derived the joint distribution of all $\frac{1}{2}p(p+1)$ quadratic forms $y_i' A_{ij} y_j$, extending classical results of non-central Wishart distribution and normal quadratic forms. Based on incomplete Gamma and Beta integrals involving hypergeometric functions of matrix arguments, we reconsider multivariate analysis of variance in three settings (A), (B), and (C). 

By expressing these forms through the orthogonal decomposition $Y = XB'$ and introducing the Kronecker‐sum matrix $\varTheta$, we showed that, under $n>p-1$, their joint density admits a compact form, similar to the Wishart density or the multivariate Gamma function
$$
\frac{|\varTheta|^{\frac{1}{2}}}{2^{\frac{np}{2}} \Gamma_p(\frac{n}{2})}\operatorname{etr}\left(-\frac{1}{2}UT\right)\,|T|^{\frac{n-p-1}{2}},
$$
where $U=(\mathrm{tr} A_{ij})$ and $T=(b_i'Sb_j)$. Also, the derivation of such distributions facilitates the study of eigenvalue distributions in non-central settings.  Our stochastic decomposition $T_1$ further clarifies when $Z$ can be partially diagonalised, enabling tractable analysis of latent roots. These formulas pave the way for asymptotic investigations, such as Tracy–Widom limits of the largest latent root in high‐dimensional regimes, and may inform future applications in multivariate inference and random matrix study.



\bibliographystyle{abbrvnat}
\bibliography{bibtex}


\end{document}